\newtheorem{thm}{Theorem}[section]
\newtheorem{cor}[thm]{Corollary}
\newtheorem{lem}[thm]{Lemma}
\newtheorem{prop}[thm]{Proposition}
\newtheorem{theorem}[thm]{Theorem}
\theoremstyle{definition}
\newtheorem{dfn}[thm]{Definition}
\newtheorem{rem}[thm]{Remark}
\newtheorem{ex}[thm]{Example}
\newtheorem{obs}[thm]{Observations}
\newtheorem*{claim*}{Claim}
\theoremstyle{remark}
\numberwithin{equation}{thm}
\def\ker{\operatorname{Ker}}
\def\m{\mathfrak{m}}
\newcommand{\msa}{\mathscr{A}_1} 
\newcommand{\swp}{S^{p\wedge p^2}} 
\newcommand{\nn}{\textrm{NNL}_1} 
\renewcommand{\psi}{\tau}
\renewcommand{\kappa}{\omega}
\newcommand{\gj}{\tilde{f_j}} 
\newcommand{\go}{\tilde{f_1}}
\newcommand{\gt}{\tilde{f_t}}
\newcommand{\gr}{\tilde{f_r}} 
\newcommand{\projdim}{\operatorname{proj\,dim}}
\newcommand{\whp}{\frac{\omega^{p-1}+\cdots + h^{p-1}}{p}}
\newcommand{\wh}{\omega^{p-1}+\cdots + h^{p-1}}
\newcommand{\WH}{W^{p-1}+\cdots + h^{p-1}}
\newcommand{\om}{\omega} 
\newcommand{\un}{\underline}
\newcommand{\sqn}{\sqrt[n]{f}} 
\begin{document}
\setlength{\baselineskip}{15pt}
\title{On the integral closure of radical towers in mixed characteristic}
\author{Daniel Katz}
\address{Department of Mathematics, University of Kansas, Lawrence, KS 66045-7523, USA}
\email{dlk53@ku.edu}

\author{Prashanth Sridhar}
\address{Charles University, Faculty of Mathematics and Physics, Ke Karlovu 3, 121 16 Prague 2, Czech Republic}
\email{sridhar@karlin.mff.cuni.cz}
\begin{abstract}{We study the Cohen-Macaulay property of a particular class of radical extensions of an unramified regular local ring having mixed characteristic.}

\end{abstract}
\keywords{} %
\subjclass[2010]{}
\maketitle

\section{Introduction} In this note we consider the integral closure of certain radical towers in mixed characteristic $p>0$. The motivation for our work is the following. Suppose $S$ is an integrally closed Noetherian domain with quotient field $L$ and $f\in S$ is square-free, i.e., $fS_Q = QS_Q$, for all height one primes $Q\subseteq S$ containing $f$. It is well-known that if the natural number $n\in S$ is a unit, then $R := S[\sqrt[n]{f}]$ \footnote{Throughout this paper, when we write $\sqrt[n]{f}$ for an element $f\in S$, we simply mean that $f$ is a root of $X^n -f$, where $X$ is an indeterminate over $S$.}  is integrally closed\footnote{To see that $S[\sqn]$ is integrally closed, set $g(X) = X^n-f$ and note that since $S$ has characteristic zero, the extension of quotient fields derived from $S\subseteq S[\sqn]$ is separable, and thus $g'(\sqn)$, and hence $nf$, multiplies the integral closure of $S[\sqn]$ into $S[\sqn]$. Since $n$ is a unit, one just has to check that $S[\sqn]_P$ is a DVR for any height one prime $P\subseteq S[\sqn]$ containing $f$ (since $S[\sqn]$ is free over $S$, and therefore satisfies Serre's condition $S_2$). But for such a prime, it is easy to see that $P_P = (\sqn)_P$ since $f$ is square-free in $S$ and thus $P_{P\cap S} = (f, \sqn)S_{P\cap S} [\sqn]$.}. If $f_1, \ldots, f_r \in S$ are square-free and no two elements in the sequence are contained in the same height one prime of $S$, then $R := S[\sqrt[n]{f_1}, \ldots, \sqrt[n]{f_r}]$ is integrally closed (see \cite{HK}). In each of these cases $R$ is a free $S$-module, so that if $S$ is a Cohen-Macaulay ring, then $R$ is also Cohen-Macaulay. In other words, maintaining the same assumptions, the integral closure of $S$ in the given radical extension of $L$ is Cohen-Macaulay. When $n$ is not a unit in $S$, the ring $S[\sqrt[n]{f_1}, \ldots, \sqrt[n]{f_r}]$ need not be integrally closed. However, if $S$ is regular, and $R$ denotes the integral closure of $S$ in $L(\sqrt[n]{f_1}, \ldots, \sqrt[n]{f_r})$, it may be that $R$ is Cohen-Macaulay, though it can be difficult to determine when $R$ is Cohen-Macaulay. For example, if $S$ is an unramified regular local ring of mixed characteristic $p > 0$ and  $f\in S$ is square-free, then for $R$ the integral closure of $S[\sqrt[p]{f}]$, $R$ is Cohen-Macaulay (see \cite{DK1}, Lemma 3.2). Unfortunately, this fails even when adjoining $p$th roots of two square-free elements, see for example \cite{PS1}, Example 2.12 or \cite{PS2}, Example 4.8. The difficulties in adjoining more than one $p$th root stem from the behavior of the $f_i$ modulo $pS$. However, as observed in \cite{DK1}, \cite{PS1} and \cite{PS2} there  is good behavior when the elements are $p$-th powers modulo $p^2S$. This is largely due to the existence of a unique unramified prime $P\subseteq R$ lying over $pS$ such that the corresponding extension of residue fields is trivial. The purpose of this note is to generalize the results from \cite{DK1}, \cite{PS1}, and \cite{PS2} beyond the biradical case to what we call {\it class one radical towers}, i.e., radical towers where the image of each $f_i$ is a $p$th power in $S/p^2S$. Note that to obtain good results for general $p$-th root towers, it already suffices to understand the case when the elements are $p$-th powers modulo $pS$ (see \Cref{pwedge_p2}). Here is the main result of this paper. Note that we allow $S$ to be more general than an unramified regular local ring of mixed characteristic.

\begin{thm} Let $S$ be an integrally closed Noetherian domain and $p>0$ a prime integer that is a non-zero prime in $S$. Suppose $f_1, \ldots , f_r \in S$ are square-free and there exist $h_i \in S$ such that $f_i \equiv h_i^p$ $\textrm{mod} \ p^2S$. Suppose further that no two $f_i$ are contained in the same height one prime of $S$. Let $n_1, \ldots, n_r > 1$ be integers such that $n_i = pd_i$ with each $d_i$ a unit in $S$, and write $R$ for the integral closure of $S[\sqrt[n_1]{f_1}, \ldots, \sqrt[n_r]{f_r}\ ]$. Then $R$ is a free $S$-module. In particular, if $S$ is Cohen-Macaulay or an unramified regular local ring, $R$ is Cohen-Macaulay.
\end{thm}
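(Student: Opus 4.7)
The plan is to reduce the theorem to the pure $p$-th root case (all $n_i=p$), then establish the reduced case by induction on $r$, maintaining throughout the induction the auxiliary structural property that there is a single prime over $pS$ with trivial residue extension.

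\textbf{Reduction to $n_i=p$.} Since $p$ is prime and each $d_i$ is a unit, $\gcd(p,d_i)=1$; choosing $a_i,b_i\in\mathbb{Z}$ with $a_ip+b_id_i=1$ gives $\sqrt[n_i]{f_i}=(\sqrt[n_i]{f_i}^{\,p})^{a_i}(\sqrt[n_i]{f_i}^{\,d_i})^{b_i}$, so $S[\sqrt[n_i]{f_i}]=S[\sqrt[d_i]{f_i},\sqrt[p]{f_i}]$. Setting $S':=S[\sqrt[d_1]{f_1},\ldots,\sqrt[d_r]{f_r}]$, the ring $R$ is the integral closure of $S'[\sqrt[p]{f_1},\ldots,\sqrt[p]{f_r}]$. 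Because each $d_i$ is a unit, \cite{HK} gives that $S'$ is integrally closed and a free $S$-module. One then verifies that $S'$ inherits the hypotheses of the theorem with each $n_i$ replaced by $p$: the $f_i$ remain square-free with no two in a common height-one prime (tracked along the free extension $S\subseteq S'$); the congruence $f_i\equiv h_i^p\pmod{p^2 S'}$ is automatic; and $p$ remains a nonzero prime in $S'$ because modulo $p$ each polynomial $X^{d_i}-\bar f_i$ is separable ($d_i$ being a unit mod $p$), so $S'/pS'$ is étale over the domain $S/pS$ and hence a domain.

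\textbf{Base case and inductive structure.} In the pure $p$-th root case, induct on $r$. The base $r=1$ is \cite{DK1}, Lemma 3.2; under the stronger hypothesis $f_1\equiv h_1^p\pmod{p^2 S}$, one records an explicit description of $R_1$ that the induction will use. Setting $\alpha_1=\sqrt[p]{f_1}$, the factorization
\[
(\alpha_1-h_1)\bigl(\alpha_1^{p-1}+\alpha_1^{p-2}h_1+\cdots+h_1^{p-1}\bigr)=f_1-h_1^p\in p^2S
\]
together with the binomial expansion of $(\alpha_1-h_1)^p$ (noting $p\mid\binom{p}{j}$ for $1\le j\le p-1$) shows that $\tau_1:=(\alpha_1-h_1)^{p-1}/p$ is integral over $S$. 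One argues $R_1=S[\alpha_1,\tau_1]$, is free of rank $p$ over $S$, and has a unique prime $P_1$ over $pS$ satisfying $R_1/P_1\cong S/pS$. This ``single prime with trivial residue extension'' property is what the inductive step will preserve.

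\textbf{Inductive step and main obstacle.} Assume $R_k$, the integral closure of $S[\sqrt[p]{f_1},\ldots,\sqrt[p]{f_k}]$, is free over $S$ and has a unique prime $P_k$ over $pS$ with $R_k/P_k\cong S/pS$. Setting $\alpha=\sqrt[p]{f_{k+1}}$, form $\tau:=(\alpha-h_{k+1})^{p-1}/p$, integral over $R_k$ by the same reasoning. One argues that $R_{k+1}:=R_k[\alpha,\tau]$ is the integral closure of $R_k[\alpha]$, is free over $R_k$ (hence over $S$), and retains the unique-prime property over $pS$. The main obstacle is verifying Serre's $R_1$ condition at the relevant height-one primes: $S_2$ is automatic from freeness; primes not containing $p$ are handled by square-freeness of $f_{k+1}$ and the no-common-height-one-prime hypothesis via a classical Kummer-type argument; the critical case is the prime above $P_k$. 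Here the class-one hypothesis is essential: modulo $P_k$ the polynomial $X^p-f_{k+1}$ reduces to $(X-h_{k+1})^p$ in characteristic $p$, forcing a unique prime of $R_k[\alpha]$ above $P_k$, and the adjoined $\tau$ then provides the local uniformizer, giving regularity and trivial residue extension. Cohen--Macaulayness of $R$ over a Cohen--Macaulay $S$ is immediate from freeness; the unramified regular local case follows by Auslander--Buchsbaum applied to the free $S$-resolution of $R$.
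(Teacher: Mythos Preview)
Your proposal has two genuine gaps, one in each phase of the argument.

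\textbf{The reduction to $n_i=p$.} You set $S'=S[\sqrt[d_1]{f_1},\ldots,\sqrt[d_r]{f_r}]$ and assert that the $f_i$ remain square-free in $S'$. They do not: in $S'$ one has $f_i=(\sqrt[d_i]{f_i})^{d_i}$, so whenever $d_i>1$ the element $f_i$ is a proper power. Thus the $p$-th root theorem over $S'$, applied to the $f_i$, is not available. The paper reduces in the opposite order: it first forms the $p$-th roots $\omega_i=(\sqrt[n_i]{f_i})^{d_i}$ over $S$ (where the $f_i$ \emph{are} square-free), takes the integral closure $R_0$ there via the $p$-th root case, and only then adjoins $d_i$-th roots of the $\omega_i$ over $R_0$, after checking that the $\omega_i$ are square-free in $R_0$. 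Your order could be salvaged by replacing $f_i$ with $g_i:=\sqrt[d_i]{f_i}$ and verifying $g_i\in (S')^{p\wedge p^2}$, but this is a nontrivial step you have not carried out.

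\textbf{The induction on $r$ in the $p$-th root case.} Your inductive hypothesis is that $R_k$ has a \emph{unique} prime over $pS$ with trivial residue extension. This already fails at $k=1$: by \cite{DK1}, Lemma~3.2 (recalled in the paper), $R_1=S[\omega_1,\tau_1]$ has exactly \emph{two} height-one primes over $pS$, namely $Q_{1,1}=(p,\omega_1-h_1,\tau_1)$ with local generator $\omega_1-h_1$, and $Q_{1,2}=(p,\omega_1-h_1,\tau_1-c_1')$ with local generator $p$. Consequently $p$ is not prime in $R_1$, and more generally $p$ is not prime in $R_k$ for any $k\ge 1$. But the base case you invoke (\cite{DK1}, Lemma~3.2) requires $p$ to be a prime element of the base ring; hence it cannot be iterated with $R_k$ in the role of $S$. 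This is the essential obstruction, and it is precisely why the paper does \emph{not} argue by induction on $r$. Instead, the paper forms the join $V$ of all the $R_i$ simultaneously, shows $V$ is free over $S$, identifies the $2^r$ height-one primes of $V$ over $p$ and the at most $2^r-r-1$ singular ones among them, explicitly desingularizes by adjoining the elements $p^{-1}(\omega_{i_1}-h_{i_1})^{p-2}(\omega_{i_2}-h_{i_2})$, and finally exhibits a specific free $S$-submodule $\langle\mathscr{V}\rangle_S$ of rank $p^r$ which it proves is a ring (hence the integral closure). The combinatorics of $\mathscr{V}$ and the verification that it is closed under multiplication are the technical heart of the argument and have no counterpart in your sketch.
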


The main focus of the proof of this theorem is when each $n_i = p$, so that we get a proper generalization of the results in \cite{DK1}, \cite{PS1}, \cite{PS2}. We are able to reduce to this case by invoking the results that hold when the degree of the radical extension is a unit. In what follows, section two deals with our conventions and preliminary results, while our main results are presented in section three. In section four, we present some applications of the main result in section three. In particular, in Theorem 4.1, we show that if $R$ is the integral closure of an unramified regular local ring $S$ of mixed characteristic in a particular type of radical tower that is not a class one tower, then $R$ admits a small Cohen-Macaulay algebra, even if $R$ itself is not Cohen-Macaulay. 
\par 
\textbf{Acknowledgements.} Prashanth Sridhar was supported by the grant GA ČR 20-13778S from the Czech Science Foundation.

\section{Preliminaries} All rings considered in this paper are commutative and Noetherian. Throughout this paper $S$ will denote an integrally closed Noetherian domain in which the prime $p \in \mathbb{Z}$ is a non-zero prime element in $S$. We will use $L$ to denote the quotient field of $S$. Though our primary interest is in the case that $S$ is an unramified regular local ring having mixed characteristic $p > 0$, our main results hold under more general conditions, so that the cases when $S$ is an unramified regular local ring often appear as secondary statements. For the remainder of the paper, we will assume that the non-zero, non-unit elements $f_1, \ldots, f_r\in S$ are square-free. We will also use the following notation throughout the rest of this paper. 

\medskip
\noindent
{\bf Definitions 2.1.} For $S$ and $f_1, \ldots, f_r\in S$ as above:
\begin{enumerate}
\item[(i)] The elements $f_1, \ldots, f_r \in S$ are said to satisfy $\msa$ if no two of these elements are contained in the same height one prime of $S$. 
\item[(ii)] We will use $\swp$ to denote the set of elements $f\in S$ that are $p$-th powers modulo $p^2S$. Note that this is a multiplicatively closed subset of $S$, and that if $f\in S^{p\wedge p^2}$ is square-free, then $p\nmid f$. We will write $S^{p\wedge p}$ for the subring of $S$ that are $p$th powers modulo $pS$. 
\item[(iii)] For square-free elements $f_1, \ldots, f_r \in S$, we call $L(\sqrt[n_1]{f_1}, \ldots, \sqrt[n_r]{f_r})$ a {\it a class one radical tower} if: \begin{enumerate}
\item[(a)] The set $\{f_1, \ldots, f_r\}$ satisfies $\msa$ and 
\item[(b)] Each $f_i \in \swp$. 
\end{enumerate}
\item[(iv)] For a commutative ring $R$ of dimension at least one, we use the notation \[\nn(R):=\{P\in Spec(R)\: | \: height(P)=1,\: R_P\text{ is not a DVR}\}\]
\item[(v)]  For a Noetherian ring $R$, a prime ideal $Q\subseteq R$ and $q_1, \ldots, q_r \in Q$, we will use $(Q\:|\:q_1,\dots,q_r)$ to indicate that $Q_Q=(q_1,\dots,q_r)_Q$.
\item[(vi)] Throughout this paper, in any discussion involving $f_1, \ldots, f_r\in S$, and $n_1, \ldots, n_r\geq 2$, we set $A := S[\sqrt[n_1]{f_1}, \ldots, \sqrt[n_r]{f_r}]$, $K := L(\sqrt[n_1]{f_1}, \ldots, \sqrt[n_r]{f_r})$ and let $R$ denote the integral closure of $A$, or equivalently, the integral closure of $S$ in $K$. 
\end{enumerate} 

\medskip
For ease of reference, we include the following proposition which can essentially be found as separate Propositions 5.2 and 5.3 in \cite{HK}. In these propositions from \cite{HK}, it is assumed all roots have the same order. However, it is not difficult to see that the same proofs give rise to the following statement. 
\begin{prop}\label{HK}
Let $S$ be an integrally closed Noetherian domain and $n = n_1\cdots n_r\in S$ a unit for some positive integer $n$. Let $f_1,\dots,f_r\in S$ be square-free elements satisfying $\mathscr{A}_1$. Then 
\begin{enumerate}
\item[(i)] $f_2,\dots,f_r$ are square-free and satisfy $\msa$ in $S[\sqrt[n_1]{f_1}]$.
\item[(ii)] $R=S[\sqrt[n_1]{f_1},\dots,\sqrt[n_r]{f_r}]$ is integrally closed.
\end{enumerate} 
\end{prop}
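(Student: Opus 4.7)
The plan is to adapt the arguments in Propositions 5.2 and 5.3 of \cite{HK} essentially verbatim, observing that those proofs never really use that all the $n_i$ are equal --- only that the product $n = n_1 \cdots n_r$ is a unit in $S$. I would prove (i) first and then deduce (ii) by a short induction on $r$.

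For (i), set $T := S[\sqrt[n_1]{f_1}]$. Since $n_1 \mid n$ is a unit in $S$ and the introductory footnote applies, $T$ is an integrally closed Noetherian domain. Fix $i \geq 2$ and let $Q \subseteq T$ be a height one prime containing $f_i$. Setting $\p := Q \cap S$, I would invoke going-down --- which holds because $S$ is normal and $S \hookrightarrow T$ is an integral extension of domains --- to conclude $\height(\p) = \height(Q) = 1$ (the prime $\p$ is nonzero because it contains the non-zero, non-unit $f_i$). By $\msa$ in $S$, $f_1 \notin \p$, so $f_1$ is a unit in $S_\p$, and the generator $\alpha := \sqrt[n_1]{f_1}$ of $T_\p$ over $S_\p$ is itself a unit satisfying the separable polynomial $X^{n_1} - f_1$, whose derivative $n_1 X^{n_1 - 1}$ is a unit at $\alpha$. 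It follows that $T_\p$ is finite étale over the DVR $S_\p$, hence a finite product of DVRs unramified over $S_\p$. Any uniformizer of $S_\p$ then generates the maximal ideal of each factor of $T_\p$; since $f_i$ is square-free in $S$, it differs from such a uniformizer by a unit, so $f_i T_Q = Q T_Q$. This gives square-freeness of $f_i$ in $T$, and the $\msa$ condition for $f_2, \dots, f_r$ in $T$ is automatic because any height one prime of $T$ containing two of them would contract to a height one prime of $S$ doing the same, violating $\msa$ in $S$.

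For (ii), I would induct on $r$. The base case $r = 1$ is the content of the footnote in the introduction. For the inductive step, apply (i) to replace the data $(S, f_1, \dots, f_r, n_1, \dots, n_r)$ by $(T, f_2, \dots, f_r, n_2, \dots, n_r)$ --- noting that $n_2 \cdots n_r$ remains a unit in $T$ because it already is in $S$ --- and invoke the inductive hypothesis on $T$. The only delicate step in the whole argument is confirming that the contracted prime $\p = Q \cap S$ has height one, which requires normality of $S$ together with going-down; everything else reduces to straightforward bookkeeping over a DVR.
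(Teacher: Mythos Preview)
Your proposal is correct and matches the paper's approach exactly: the paper gives no independent proof of this proposition but simply observes that the arguments of Propositions~5.2 and~5.3 in \cite{HK} go through unchanged when the $n_i$ are allowed to differ, which is precisely what you outline. Your write-up in fact supplies more detail than the paper does.
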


We will make considerable use of the following observations. 
\begin{obs}\label{correctdegree} (i)  Let $f_1, \ldots, f_r\in S$ be square free elements. Set $A := S[\sqrt[n_1]{f_1}, \ldots,\sqrt[n_r]{f_r}]$ and assume $\{n, f_1, \ldots, f_r\}$ satisfies $\msa$, where $n = n_1\cdots n_r$.  Then $[K:L] = n$, where $K$ denotes the quotient field of $A$. Moreover, $A[\frac{1}{n}]$ is integrally closed. 

\medskip
\noindent
(ii)   Let $f_1, \ldots, f_r\in S$ be square free elements. Set $A := S[\sqrt[n_1]{f_1}, \ldots,\sqrt[n_r]{f_r}]$ and assume $\{n, f_1, \ldots, f_r\}$ satisfies $\msa$, where $n = n_1\cdots n_r$.  Let $Q$ denote the kernel of the natural homomorphism  from $S[X_1, \ldots, X_r]$ to $A$, where the $X_i$ are indeterminates over $S$. Then $Q = \langle X_1^{n_1}-f_1, \ldots, X_r^{n_r}-f_r\rangle$. 

\medskip
\noindent
(iii) Let $S\subseteq C\subseteq D$ be extension of rings such that $D$ is integral over $S$ and is a domain. If $C$ is regular in codimension one and $D$ is birational to $C$, then $D$ is regular in codimension one.
\end{obs}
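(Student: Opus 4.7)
My plan is to handle the three parts in sequence, with (i) feeding directly into (ii), and (iii) requiring a separate but short localization argument.

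For part (i), the key observation is that the hypothesis that $\{n, f_1, \ldots, f_r\}$ satisfies $\msa$ is tailor-made for inverting $n$. The height one primes of $S[1/n]$ are exactly the height one primes of $S$ not containing $n$, so in $S[1/n]$ the elements $f_1, \ldots, f_r$ remain square-free and continue to satisfy $\msa$. Since $n$ is a unit in $S[1/n]$, \Cref{HK} applies and shows that $A[1/n]$ is integrally closed; as recorded in the introduction (and in \cite{HK}), it is furthermore $S[1/n]$-free of rank $n$ on the standard monomials $\sqrt[n_1]{f_1}^{a_1}\cdots\sqrt[n_r]{f_r}^{a_r}$ with $0\le a_i < n_i$. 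Tensoring with $L$ then gives $[K:L]=n$.

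For part (ii), let $J:=\langle X_1^{n_1}-f_1,\ldots,X_r^{n_r}-f_r\rangle\subseteq Q$. The quotient $B:=S[X_1,\ldots,X_r]/J$ is $S$-free of rank $n$ on the residues of the standard monomials, so the natural surjection $B\twoheadrightarrow A$ becomes a surjection of rank-$n$ $L$-vector spaces after tensoring with $L$, by part (i), and is therefore an isomorphism generically. Because $B$ is $S$-torsion free, the kernel of $B\to A$ was already zero over $S$, whence $Q=J$.

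For part (iii), fix a height one prime $P$ of $D$ and set $\mathfrak{q}:=P\cap C$. The multiplicative set $C\setminus\mathfrak{q}$ lies in $D\setminus P$ and so becomes units in $D_P$; this makes the inclusion $C_\mathfrak{q}\hookrightarrow D_P$ an integral extension sharing its quotient field. Integral extensions preserve Krull dimension, so $\height(\mathfrak{q})=\dim C_\mathfrak{q}=\dim D_P=1$, and hence $C_\mathfrak{q}$ is a DVR by the codimension-one regularity of $C$. Since $D_P$ is integral over the integrally closed subring $C_\mathfrak{q}$ with the same quotient field, $D_P=C_\mathfrak{q}$, a DVR.

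No genuine obstacle arises in any of the three steps: the only points requiring mild care are verifying that the hypotheses of \Cref{HK} really do survive the inversion of $n$ in part (i), and checking that the localized extension in part (iii) remains integral after going from $C\subseteq D$ to $C_\mathfrak{q}\subseteq D_P$.
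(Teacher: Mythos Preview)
Your arguments for parts (i) and (ii) are essentially correct and close in spirit to the paper's. For (i) you invert $n$ and invoke \Cref{HK} globally, whereas the paper proceeds by inverting $n$ and then checking irreducibility of $X_i^{n_i}-f_i$ stage by stage; both routes amount to the same induction. For (ii) your rank-counting argument (the surjection $S[X_1,\dots,X_r]/J\twoheadrightarrow A$ of torsion-free $S$-modules becomes an isomorphism of $n$-dimensional $L$-spaces upon tensoring with $L$, hence was already injective) is a cleaner packaging of the paper's inductive minimal-polynomial argument.

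Part (iii), however, has a genuine gap. Your assertion that $C_{\mathfrak q}\hookrightarrow D_P$ is an integral extension is false in general: localizing $D$ at $P$ inverts elements of $D\setminus P$ that need not lie in (or be integral over) $C_{\mathfrak q}$. For a concrete instance, take $S=C=\mathbb Z$, $D=\mathbb Z[i]$, $P=(2+i)$, $\mathfrak q=(5)$; then $1/(2-i)=(2+i)/5\in D_P$ has minimal polynomial $X^2-\tfrac{4}{5}X+\tfrac{1}{5}$ over $\mathbb Q$, so it is not integral over $\mathbb Z_{(5)}$. Once this integrality claim fails, your dimension argument for $\height(\mathfrak q)=1$ collapses: incomparability in the integral extension $C\subseteq D$ only gives $\height(\mathfrak q)\ge\height(P)=1$, and you have no mechanism for the reverse inequality because $C$ is not assumed integrally closed, so going down for $C\subseteq D$ is unavailable.

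This is exactly why the statement includes $S$ in the hypotheses and why the paper's proof routes through it. Since $S$ is integrally closed (a standing assumption) and $S\subseteq D$ is integral, going down gives $\height(P\cap S)=1$; then incomparability in the integral extension $S\subseteq C$ forces $\height(\mathfrak q)\le\height(P\cap S)=1$. With $\height(\mathfrak q)=1$ in hand, $C_{\mathfrak q}$ is a DVR and the birational inclusion $C_{\mathfrak q}\subseteq D_P$ forces equality (any proper overring of a DVR inside its fraction field is the fraction field itself), with no need for integrality of that localized inclusion. Your final paragraph even flags the localized integrality as the delicate point; it is indeed the point where the argument breaks, and the fix is to bring $S$ back into play.
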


\begin{proof} For (i), note that if $n\geq 1$ and $T$ is an integrally closed domain of characteristic zero and $f \in T$ is square-free in $T$, then $f$ is not a $q$th power in the quotient field of $T$, for any prime $q$ dividing $n$. Thus, by Theorem 9.1 in \cite{L}, $X^n-f$ is irreducible over the quotient field of $T$ (equivalently over $T$), where $X$ is an indeterminate over $T$.  Otherwise, $X^n-f$ has a root in $T$, in which case, $f$ is not square-free in $T$. Now let $K_i$ denote the quotient field of $S_i := S[\sqrt[n_1]{f_1}, \ldots, \sqrt[n_i]{f_i}]$. By what we have just shown, $[K_1:L] = n_1$. Proceeding by induction on $i$, it suffices to show that $X_i^{n_i}-f_i$ is the minimal polynomial of $\sqrt[n_i]{f_i}$ over $K_{i-1}$, where $X_i$ is an indeterminate over $K_{i-1}$. For this, by the $\msa$ assumption, there is no harm in inverting $n$. But then, by Proposition \ref{HK}, $S_{i-1}[\frac{1}{n}]$ is integrally closed and $f_i$ is square-free in $S_{i-1}[\frac{1}{n}]$, and thus, $X_i^{n_i}-f_i$ is irreducible over $K_i$, which is what we want. It follows now that $[K:L] = n$. The second statement follows immediately from Proposition \ref{HK}. 

\medskip
\noindent
For (ii), we will use the following fact, which follows easily from the division algorithm. Suppose $T$ is an integral domain with quotient field $F$, $E$ a field extension of $F$ and $\alpha \in E$ algebraic over $F$. Let $p(x)$ be the (monic) minimal polynomial of $\alpha$ over $F$ and assume $p(x) \in T[x]$. Then 
$T[\alpha] \cong T[x]/\langle  p(x)\rangle$. Now, proceeding by induction on $r$, if $r = 1$, we have that $S[\sqrt[n_1]{f_1}] \cong S[X_1]/\langle X_1^{n_1}-f_1\rangle$, since, as in part (i), $X_1^{n_1}-f_1$ is the minimal polynomial for $\sqrt[n_1]{f_1}$ over $L$. From the proof of part (i), we also have that $X_r^{n_r}-f_r$ is the minimal polynomial of $\sqrt[n_r]{f_r}$ over the quotient field of $B := S[\sqrt[n_1]{f_1}, \ldots, \sqrt[n_{r-1}]{f_{r-1}}]$. Thus, $B[\sqrt[n_r]{f_r}]$ is isomorphic to $B[X_r]/\langle X_r^{n_r}-f_r\rangle$. Since $B$ is isomorphic to $S[X_1, \ldots, X_{r-1}]/\langle X_1^{n_1}-f_1, \ldots, X_{r-1}^{n_{r-1}}-f_{r-1}\rangle$, this gives us what we want.

\medskip
\noindent
For (iii), let $P$ be a height one prime in $D$, $P_0 := C\cap P$ and $Q := S\cap P$. Since $S\subseteq D$ satisfies going down, $Q$ has height one. Since $C$ is integral over $S$, $P_0$ has height one. Thus, $C_{P_0}$ is a DVR. Since $C$ and $D$ are birational, and $C_{P_0} \subseteq D_P$, we have $C_{P_0} = D_P$, so $D$ is regular in codimension one.
\end{proof} 

We include the following from \cite{DK1} and \cite{PS2} for easy reference:
\begin{lem}[\cite{DK1}]\label{P6}
Let $p\geq 3$ and write $p=2k+1$. For $h\in S\setminus pS$ and $W$ an indeterminate over $S$, if
\begin{equation}
    C :=(W-h)^p-(W^p-h^p)=\sum_{j=1}^k (-1)^{j+1}{p\choose j}(W\cdot h)^j[W^{p-2j}-h^{p-2j}]
\end{equation}
$C' := C\cdot (p(W-h))^{-1}$ and $\tilde{P}:=(p,W-h)S[W]$, then $C'\notin \tilde{P}$. 
\end{lem}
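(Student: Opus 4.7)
The plan is to reduce the statement $C' \notin \tilde P = (p, W-h)S[W]$ to a nonvanishing statement in the residue ring $S[W]/\tilde{P} \cong S/pS$. Concretely, the image of $C'$ in this quotient is obtained by first reducing coefficients modulo $p$ and then specializing $W \mapsto h$. Since $p$ is a prime in $S$, the ring $S/pS$ is a domain, so any unit multiple of $h^{p-1}$ in $S/pS$ is nonzero (using $h \notin pS$). Thus, if I can show $C'(h) \equiv u \cdot h^{p-1} \pmod{pS}$ for some $u \in S \setminus pS$, then $C' \notin \tilde P$ follows immediately.

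First I would verify that $C' \in S[W]$ by checking each summand of the given expression for $C$ is divisible by $p(W - h)$. For $1 \le j \le k$, the binomial coefficient $\binom{p}{j}$ is divisible by $p$ (set $a_j := \binom{p}{j}/p \in \mathbb{Z}$), and $W^{p-2j} - h^{p-2j}$ is divisible by $W - h$ with quotient $\sigma_m(W, h) := W^m + W^{m-1}h + \cdots + h^m$ for $m = p - 2j - 1$. This gives
\[
C' \;=\; \sum_{j=1}^k (-1)^{j+1}\, a_j\, (Wh)^j\, \sigma_{p-2j-1}(W, h) \;\in\; S[W].
\]

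The main step is then a direct modular computation. Specializing $W \mapsto h$ sends $(Wh)^j$ to $h^{2j}$ and $\sigma_{p-2j-1}(W, h)$ to $(p-2j) h^{p-2j-1}$, so each summand contributes $(-1)^{j+1} a_j (p-2j) h^{p-1}$. Reducing modulo $p$, the factors $p-i$ appearing in $a_j = (p-1)(p-2)\cdots(p-j+1)/j!$ each become $-i$, yielding $a_j \equiv (-1)^{j-1}/j \pmod{p}$ (with $1/j$ the inverse in $\mathbb{F}_p$); similarly $p - 2j \equiv -2j$. Substituting, each term of the integer coefficient sum reduces to $-2 \pmod{p}$, and summing $k$ such terms gives $-2k = -(p-1) \equiv 1 \pmod{p}$. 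Hence $C'(h) \equiv h^{p-1} \pmod{pS}$, which is nonzero in $S/pS$, so $C' \notin \tilde{P}$. The only delicate point is this final mod-$p$ telescoping; every other step (divisibility of $\binom{p}{j}$ by $p$, polynomial division by $W-h$, and the identification $S[W]/\tilde P \cong S/pS$) is routine.
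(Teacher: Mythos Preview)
Your argument is correct. The paper does not actually supply a proof of this lemma: it is quoted ``for easy reference'' from \cite{DK1}, so there is no in-paper proof to compare against. In fact, along the way you have established the sharper congruence $C' \equiv h^{p-1} \pmod{\tilde P}$, which is exactly the content of the paper's next cited result, Lemma~\ref{l_imodP}; the desired conclusion $C' \notin \tilde P$ then follows immediately since $h \in S\setminus pS$ and $S/pS$ is a domain.
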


\begin{lem}[\cite{PS2}]\label{l_imodP}
Let $p\geq 3$ and write $p=2k+1$. For $h\in S\setminus pS$ and $W$ an indeterminate over $S$, suppose $C'$ is as defined in \ref{P6}. Then $C' \equiv h^{p-1}\text{ mod }(p,W-h)S[W]$.
\end{lem}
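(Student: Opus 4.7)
My plan is to compute $C'$ explicitly as a polynomial in $S[W]$ and then reduce modulo $(p, W-h)S[W]$ by substituting $W = h$, turning the problem into an arithmetic identity in $\mathbb{Z}/p\mathbb{Z}$.

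First, I would factor $(W-h)$ out of each $W^{p-2j}-h^{p-2j}$ via the geometric sum $\sum_{i=0}^{p-2j-1} W^{p-2j-1-i}h^i$, and observe that $p \mid \binom{p}{j}$ for $1 \leq j \leq p-1$. Combining these (both divisions happen at the integer level), one obtains the explicit polynomial expression
\[
C' = \sum_{j=1}^k (-1)^{j+1}\tfrac{1}{p}\binom{p}{j}(Wh)^j\sum_{i=0}^{p-2j-1} W^{p-2j-1-i}h^i \in S[W].
\]
Setting $W \equiv h$ modulo $(p, W-h)$, the inner geometric sum collapses to $(p-2j)h^{p-2j-1}$ and $(Wh)^j$ to $h^{2j}$, so
\[
C' \equiv h^{p-1}\sum_{j=1}^k (-1)^{j+1}\tfrac{1}{p}\binom{p}{j}(p-2j) \pmod{(p, W-h)}.
\]

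The key arithmetic input is the congruence $\tfrac{1}{p}\binom{p}{j} \equiv \tfrac{(-1)^{j-1}}{j}\pmod{p}$, which follows from the identity $\tfrac{1}{p}\binom{p}{j} = \tfrac{1}{j}\binom{p-1}{j-1}$ together with the classical congruence $\binom{p-1}{j-1} \equiv (-1)^{j-1}\pmod{p}$ (readily seen by reducing $(p-1)(p-2)\cdots(p-j+1)$ modulo $p$). Since $(-1)^{j+1}(-1)^{j-1} = 1$ and $p \cdot j^{-1} \equiv 0 \pmod p$, each term in the sum collapses to $-2$, giving
\[
C' \equiv h^{p-1}\cdot(-2k) = h^{p-1}\cdot(1-p) \equiv h^{p-1} \pmod{(p, W-h)},
\]
as required.

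The main obstacle I anticipate is purely bookkeeping: one must carefully perform the division of $\binom{p}{j}$ by $p$ at the integer level before reducing modulo $p$ (so that $\tfrac{1}{p}\binom{p}{j}$ is an honest scalar in $\mathbb{Z}$), and then track signs through the index shift in the binomial identity and the Fermat-type reduction. Once these are in hand, the remaining computation is a routine collapse using $2k = p-1$.
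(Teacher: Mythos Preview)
Your argument is correct. The factorization $W^{p-2j}-h^{p-2j}=(W-h)\sum_i W^{p-2j-1-i}h^i$ together with $p\mid\binom{p}{j}$ for $1\le j\le k$ shows that $C'$ is the explicit element of $S[W]$ you wrote down, and the reduction modulo $(p,W-h)$ is handled cleanly. The arithmetic step is right: $\tfrac{1}{p}\binom{p}{j}=\tfrac{1}{j}\binom{p-1}{j-1}\equiv\tfrac{(-1)^{j-1}}{j}\pmod p$, so each summand contributes $-2$ and the total is $-2k=1-p\equiv 1\pmod p$.

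As for comparison: the paper does not supply its own proof of this lemma; it is simply quoted from \cite{PS2}. Your direct computation is exactly the sort of verification one would expect, and there is nothing to compare against in the present paper.
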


\medskip

Our work in section three relies heavily on Lemma 3.2 in \cite{DK1}. Let $f\in S$ be square-free such that $f\in \swp$, say $f = h^p+p^2g$. Take $\omega$ satisfying $\omega ^p = f$, and set $\tau := \whp = \frac{pg}{\omega-h}$. Then Lemma 3.2 from \cite{DK1} shows that $R$, the integral closure of $S[\omega]$, equals $S[\omega, \tau]$ and that $R$ is a free $S$-module. The proof relies on showing that $Q_1 := (p,\omega-h, \tau)$ and $Q_2 := (p, \omega-h, \tau-c')$ are the only height two primes in $S[\omega, \tau]$ containing $p$, where $c'$ is the image in $S[\omega]$ of the element $C'$ in \Cref{P6} under the map sending $W\rightarrow \omega$. This is done by noting that $\tau$ satisfies $l(T) := T^2-c'T -g(\omega-h)^{p-2}$ when $p > 2$ and that if $\widetilde{Q}$ is a height two prime in $S[\omega, T]$ containing $l(T)$, then $\widetilde{Q_1}$ equals $\widetilde{Q_1} := (p, \omega-h, T)S[\omega, T]$ or $\widetilde{Q_2} := (p, \omega-h, T-c')S[\omega, T]$. However, the proof neglected to show that the kernel of the natural map from $S[\omega, T]$ to $S[\omega, \tau]$ is contained in these primes. The next lemma closes this small gap.

\begin{lem}\label{taukernel} Let $f\in \swp$ be square-free, $f = h^p+p^2g$ and $\omega ^p = f$ a $p$-th root in a field extension of $L$. Set $\tau = \frac{\omega^{p-1}+\cdots +h^{p-1}}{p} = \frac{pg}{\omega-h}$. Consider the natural map $\phi : S[\omega, T] \to S[\omega, \tau]$, where $T$ is an indeterminate over $S[\omega]$. Then $\textrm{Ker}(\phi)$ is generated by $l(T) := T^2-c'T -g(\omega-h)^{p-2}, m(T) := pT-(\wh), n(T) := (\omega-h)T-pg$, if $p > 2$ and $l_0(T) = T^2-hT-g, m(T), n(T)$, if $p = 2$. In particular, for $\widetilde{Q_1}$ and $\widetilde{Q_2}$ in the paragraph above, $\widetilde{Q_1}$ and $\widetilde{Q_2}$ contain $\ker(\phi)$. 
\end{lem}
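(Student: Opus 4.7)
My strategy is to use the polynomial $l$ (or $l_0$ when $p=2$), which is monic of degree two in $T$, to reduce an arbitrary element of $\ker(\phi)$ to a linear remainder, and then identify the remaining condition with a module of syzygies of the pair $(\omega-h,\,pg)$ in $S[\omega]$. I would begin by verifying through direct substitution that $l, m, n$ (or $l_0, m, n$) each vanish at $T=\tau$; the key underlying identity is $(\omega-h)(\wh)=\omega^p-h^p=p^2g$, together with the two expressions $\tau = pg/(\omega-h) = (\wh)/p$. The quadratic relation then follows from $\tau\cdot\tau(\omega-h) = g(\wh)$ combined with the defining identity of $c'$.

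Since $l$ is monic in $T$, polynomial division writes any $F(T)\in S[\omega,T]$ as $F=q(T)\,l(T)+a+bT$ with $a,b\in S[\omega]$, and $F\in\ker(\phi)$ precisely when $a+b\tau=0$ in $S[\omega,\tau]$. Multiplying by $\omega-h$ and using $\tau(\omega-h)=pg$ turns this into the $S[\omega]$-linear relation
\[
a(\omega-h)+b\,(pg)=0.
\]
It therefore suffices to show that the $S[\omega]$-module of such pairs $(a,b)$ is generated by the coefficient vectors $\bar m=(-(\wh),\,p)$ and $\bar n=(-pg,\,\omega-h)$ of $m$ and $n$ in the $S[\omega]$-basis $\{1,T\}$ of $S[\omega,T]/(l)$. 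The crucial step is a reduction modulo $\omega-h$: since $S[\omega]/(\omega-h)\cong S/(h^p-f)=S/(p^2g)$, any lift $b_0\in S$ of the image of $b$ must satisfy $b_0\cdot pg\in p^2g\,S$, and as $S$ is a domain with $pg\neq 0$, this forces $b_0\in pS$. Writing $b=\alpha p + \beta(\omega-h)$ with $\alpha\in S$ and $\beta\in S[\omega]$, substituting back, and cancelling the nonzero factor $\omega-h$ in the domain $S[\omega]$, the identity $p^2g=(\omega-h)(\wh)$ yields $a=-\alpha(\wh)-\beta\,(pg)$. Thus $(a,b)=\alpha\bar m+\beta\bar n$, which completes the syzygy computation.

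The main technical hurdle is precisely this reduction modulo $\omega-h$ and the ensuing cancellation of $pg$, both of which depend on $S$ and $S[\omega]$ being domains. The case $p=2$ is strictly parallel: $l_0 = T^2-hT-g$ is monic of degree two in $T$, and the identity $(\omega-h)(\omega+h)=4g$ plays the role of $(\omega-h)(\wh)=p^2g$. Finally, the "In particular" statement (for $p>2$) is immediate: substituting $T\equiv 0 \pmod{\widetilde{Q_1}}$ or $T\equiv c' \pmod{\widetilde{Q_2}}$ reduces $l$ to $-g(\omega-h)^{p-2}\in(\omega-h)$; the congruence $\wh\equiv ph^{p-1}\pmod{\omega-h}$ places $m$ in $(p,\omega-h)$; and $n$ is visibly in $(p,\omega-h)$. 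Hence $l,m,n\in \widetilde{Q_1}\cap\widetilde{Q_2}$, so $\ker(\phi)=(l,m,n)$ is contained in both $\widetilde{Q_1}$ and $\widetilde{Q_2}$.
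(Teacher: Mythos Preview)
Your argument is correct, and the final ``in particular'' verification is essentially identical to the paper's. For the main claim, however, you and the paper take genuinely different routes to the same syzygy computation.

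After reducing (via division by the monic quadratic $l$) to a linear remainder in $\ker(\phi)$, the paper clears $\tau$ using $p\tau=\wh$, obtaining a relation $a(\omega)(\wh)-b(\omega)p=0$ in $S[\omega]$ (with the roles of $a$ and $b$ swapped relative to your convention). It then \emph{lifts} this relation to the polynomial ring $S[W]$: rewriting $W^p-f=(W-h)(\WH)-p^2g$ and invoking that $\WH,\,p$ form a regular sequence in $S[W]$, the paper reads off the Koszul syzygy and descends back to $S[\omega]$ to express the linear element as an $S[\omega]$-combination of $m$ and $n$.

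You instead clear $\tau$ using $(\omega-h)\tau=pg$ and then \emph{reduce} modulo $\omega-h$, exploiting the isomorphism $S[\omega]/(\omega-h)\cong S/(p^2g)$ to force the image of $b$ into $pS$; the decomposition $b=\alpha p+\beta(\omega-h)$ and cancellation of the nonzero factor $\omega-h$ in the domain $S[\omega]$ then finish the job. Your approach avoids the lift to $S[W]$ and the regular-sequence argument, trading them for the explicit quotient structure together with the (implicit) fact that $g\neq 0$, which you might note follows from $f$ being a square-free non-unit. The paper's route is a bit more systematic and symmetric in $p$ and $\omega-h$; yours is slightly more elementary and stays entirely inside $S[\omega]$.
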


\begin{proof} Suppose $p > 2$. The proof of Lemma 3.2 in \cite{DK1} shows that $l(T) \in \ker(\phi)$. Thus, we need only find linear generators in $\ker(\phi)$. Suppose $a(\om)T-b(\om) \in \ker(\phi)$. By definition, \[a(\om)(\wh)-b(\om)p = 0\] in $S[\om]$. Noting that $S[\om] = S[W]/\langle W^p-f\rangle$, it follows that 
\[
a(W) (\WH) -b(W)p = q(W)(W^p-f),
\]
for some $q(W) \in S[W]$. Therefore, 
\[
a(W)(\WH) -b(W)p -q(W)(W^p-h^p-p^2g) = 0
\]
and thus
\[\{a(W) -q(W)(W-h)\}(\WH) + \{-b(W)+pgq(W)\}p = 0.
\] Since $\WH, p$ form a regular sequence in $S[W]$, we have 
\[
a(W) = \lambda (W) (-p) + q(W)(W-h) \quad\textrm{and}\quad -b(W) = \lambda (W)(\WH) + q(W) (-pg).
\]  for some $\lambda (W)\in S[W]$. It follows that in $S[\om, T]$,
\[
a(\om)T-b(\om) = -\lambda(\omega)\cdot m(T) + q(\om)\cdot n(T), 
\] which is what we want. If $p = 2$, the proof is similar (though, in fact, easier), as \cite{DK1}, Lemma 3.2 shows that $l_0(T)$ belongs to $\ker(\phi)$. 
For the second statement, $l(T)$ and $n(T)$ are clearly contained in each $\widetilde{Q_i}$. That $m(T)$ is contained in each $\widetilde{Q_i}$ follows from the fact that we can write $\omega^{p-1}+\cdots + h^{p-1}$ as $(\omega-h)\cdot g(\omega) +ph^{p-1}$, for some $g(\omega)\in S$. 
\end{proof}

\section{Class one radical towers}\label{c1rt} In this section we present our main result. The crucial case for this result is Theorem \ref{class1thm} below, which deals with the case of adjoining $p$th roots of square-free elements. Before addressing Theorem \ref{class1thm}, we need a few preliminary results. 

\begin{lem}\label{kernel_lem}
Let $f_1, \ldots, f_r \in S$ be such that $K := L(\sqrt[p]{f_1}, \ldots, \sqrt[p]{f_r})$ is a class one radical extension of $L$.  Set $\omega_i: = \sqrt[p]{f_i}$. Let $R_i$ denote the integral closure of $S[\om _i]$ and write $V$ for the join of $R_1, \ldots, R_r$. Recalling from the previous section that $R_i = S[\omega_i, \tau_i]$, for $\tau _i$ as in  Lemma \ref{taukernel}, let $W_i,T_i$ be indeterminates over $S$ and let $\phi_i: S[W_i, T_i]\rightarrow R_i$ be the natural surjection. Write $A_i$ for the kernel of $\phi_i$. Then $V$ is a free $S$-module and the kernel of the natural homomorphism from $S[W_1, T_1, \ldots, W_r, T_r]$ to $V$ is generated by $A_1+\cdots + A_r$.
\end{lem}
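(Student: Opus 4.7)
My plan is to identify the natural surjection $\psi : S[W_1,T_1,\ldots,W_r,T_r]\to V$ with the multiplication map from a tensor product of the $R_i$'s. Since each $\phi_i$ factors through the larger polynomial ring, each $A_i$ (viewed as an extended ideal) lies in $\ker\psi$, so $\psi$ induces a map
\[
\bar\psi : S[W_1,T_1,\ldots,W_r,T_r]/(A_1+\cdots+A_r)\longrightarrow V.
\]
The standard isomorphism for quotients of polynomial rings in disjoint variables identifies the source of $\bar\psi$ with $R_1\otimes_S\cdots\otimes_S R_r$, so the task reduces to showing the induced $S$-algebra map $\Psi : R_1\otimes_S\cdots\otimes_S R_r\to V\subseteq K$ is an isomorphism. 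Granting this, freeness of $V$ and the kernel computation both fall out immediately.

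By Lemma 3.2 of \cite{DK1} (as patched by \Cref{taukernel}) each $R_i$ is a free $S$-module, and since $R_i\otimes_S L = L[\omega_i] = L(\omega_i) =: K_i$, with $[K_i:L]=p$ by \Cref{correctdegree}(i), the $S$-rank of $R_i$ is exactly $p$. Consequently $R_1\otimes_S\cdots\otimes_S R_r$ is free of rank $p^r$ over $S$. Surjectivity of $\Psi$ is automatic: its image is a subring of $K$ containing each $\omega_i$ and each $\tau_i$, hence equals $V$.

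For injectivity I localize at $S\setminus\{0\}$. Because $p$ is invertible in $L$, $\tau_i\in L[\omega_i]$, so $R_i\otimes_S L = K_i$, and $\Psi\otimes_S L$ becomes the multiplication map $K_1\otimes_L\cdots\otimes_L K_r \to K$. This is surjective onto $K = L(\omega_1,\ldots,\omega_r)$, and both sides have $L$-vector space dimension $p^r$, the left by multiplicativity of tensor-product dimensions and the right by \Cref{correctdegree}(i), so $\Psi\otimes_S L$ is an $L$-vector space isomorphism. Since $R_1\otimes_S\cdots\otimes_S R_r$ is torsion-free over $S$ (being free), it embeds in its $L$-localization, and therefore $\Psi$ itself is injective, hence an isomorphism.

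The main delicate point is the degree identity $[K:L]=p^r$, without which the $L$-dimension count would fail; this is precisely where the $\msa$ hypothesis and the square-freeness of the $f_i$ enter, via \Cref{correctdegree}(i). Once $\Psi$ is an isomorphism, $V$ is a free $S$-module of rank $p^r$ with $S$-basis the tensor products of the bases $\{1,\omega_i,\ldots,\omega_i^{p-2},\tau_i\}$ of the $R_i$, and the kernel of $\psi$ is exactly $A_1+\cdots+A_r$, as claimed.
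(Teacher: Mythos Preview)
Your proof is correct and follows essentially the same approach as the paper: both establish that the multiplication map $R_1\otimes_S\cdots\otimes_S R_r\to V$ is an isomorphism using that each $R_i$ is $S$-free of rank $p$ and that $[K:L]=p^r$ from \Cref{correctdegree}(i), then read off freeness of $V$ and the kernel description via the standard identification $S[W_1,T_1,\ldots,W_r,T_r]/(A_1+\cdots+A_r)\cong R_1\otimes_S\cdots\otimes_S R_r$. The only organizational difference is that the paper inducts on $r$ and first shows $V$ is free by exhibiting a basis of products, whereas you handle all $r$ at once and deduce injectivity of $\Psi$ by localizing at $L$ and invoking torsion-freeness; the mathematical content is the same.
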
 

\begin{proof} We let $K_i$ denote the quotient field of $S[\omega_i]$, so that $K = K_1\cdots K_r$ and $K_1\cdots K_r$ is the compositum of the $K_i$. We induct on $r$. Suppose $r = 2$. Since $[K:L] = p^2$ by \Cref{correctdegree}(i), $K_1$ and $K_2$ are linearly disjoint over $L$ and a basis for $K$ over $L$ is obtained by taking the products of the basis elements for each $K_i$ over $L$. Since each $R_i$ is free over $S$, we may take bases from each $R_i$ to serve as the bases for $K_i$. Thus, the product of the bases for the $R_i$ are linearly independent over $S$. Since these products span $V$, we have that $V$ is a free $S$-module. Thus, the canonical map $R_1\otimes _S R_2\to V$ is a surjection of finite free $S$-modules of the same rank and hence an isomorphism. Moreover, this map is a ring homomorphism, so that it is an isomorphism of $S$-algebras. Thus the natural map from $(S[W_1, T_1]/A_1)\otimes_S (S[W_2,T_2]/A_2) \to V$ is an isomorphism. Since $S[W_1, T_1]/A_1\otimes_S S[W_2,T_2]/A_2 \cong S[W_1, T_1, W_2, T_2]/\langle A_1+A_2\rangle$, this gives us what we want. 

\medskip
\noindent
The proof of the inductive step is essentially the same as the case $r = 2$. If $r > 2$, set $\tilde{K} = K_1\cdots K_{r-1}$ and let $\tilde{V}$ denote the join of $R_1, \ldots, R_{r-1}$, so that $[\tilde{K} :L] = p^{r-1}$, by Observation \ref{correctdegree}(i). By induction, $\tilde{V}$ is a free $S$-module, and the kernel of the natural map from $S[W_1, T_2, \ldots, W_{r-1}, T_{r-1}]$ to $\tilde{V}$ is generated by $A_1+\cdots +A_{r-1}$. Since $K = \tilde{K}\cdot K_r$ and $[K:L] = [\tilde{K}:L]\cdot [K_r:L]$, $\tilde{K}$ and $K_r$ are linearly disjoint over $L$. Now we can repeat the argument in the paragraph above on $\tilde{V}$ and $R_r$ to complete the proof, since $V$ is the join of $\tilde{V}$ and $R_r$. 
\end{proof}

\begin{lem}\label{ht1_primes_p}
Let $S$ be an integrally closed domain, and $p\geq 3$ a prime number that remains prime in $S$. Let $f_1, \ldots, f_r$ be square-free elements in $S$ and $\omega_i^p = f_i$ be $p$-th roots in some field extension of $L$.  Suppose that $K :=  L(\omega_1, \ldots, \omega_r)$ is a class one radical tower. Choose $h_i\in S$ such that $f_i\equiv h_i^p$ $\textrm{mod} \ p^2S$. Let $R_i$ denote the integral closure of $S[\omega_i]$ for $1\leq i\leq r$ and let $V$ be the join of the $R_i$. The following hold:
\begin{enumerate}
    \item[(i)] There are $2^r$ height one primes in $V$ containing $p$ and at most $2^r-r-1$ of them are singular.
    \item[(ii)] If $Q\subseteq V$ is a height one prime containing $p$, then the non-singular primes are either of the form $(Q\:|\:p)$ or $(Q\:|\:\kappa_i-h_i)$. The (possibly) singular ones are of the form $Q_{(i_1,\dots,i_l)}:=(Q\:|\:\kappa_{i_1}-h_{i_1},\dots,\kappa_{i_l}-h_{i_l})$ for some $\{i_1,\dots,i_l\}\subseteq \{1,\dots,r\}$, $i_1<\dots<i_l$ and $l\geq 2$.
\end{enumerate}
\end{lem}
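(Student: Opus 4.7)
My plan is to parametrize the height-one primes of $V$ lying over $pS$ by subsets $J\subseteq\{1,\dots,r\}$ using the tensor-product structure from \Cref{kernel_lem}, and then test each for regularity via a local embedding-dimension computation.

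By \Cref{kernel_lem}, $V \cong R_1 \otimes_S \cdots \otimes_S R_r$ is free of rank $p^r$ over $S$; write $\ell := \mathrm{Frac}(S/pS)$ for the residue field of $S$ at $pS$. The analysis in \cite{DK1}, together with \Cref{taukernel} and \Cref{l_imodP}, identifies exactly two minimal primes of each $R_i/pR_i$: the ramified prime $P_{i,1} = (p,\omega_i-h_i,\tau_i)$ and the unramified prime $P_{i,2} = (p,\omega_i-h_i,\tau_i-c'_i)$, with $R_i/P_{i,k} \cong S/pS$ in both cases. Since $(R_i)_{pS}$ is a semi-local Dedekind domain, the Chinese Remainder Theorem gives $(R_i)_{pS}/p \cong A_{i,1}\times A_{i,2}$ with $A_{i,2} = \ell$ (as $P_{i,2}$ has $e=f=1$) and $A_{i,1}$ Artinian local of length $p-1$ with residue field $\ell$. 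Tensoring these decompositions over $\ell$ and distributing products yields
\[
V_{pS}/pV_{pS} \;\cong\; \prod_{J\subseteq\{1,\dots,r\}} T_J, \qquad T_J := \bigotimes_{i\in J} A_{i,1}\ (\text{tensor over }\ell),
\]
where each $T_J$ is Artinian local. This produces exactly $2^r$ primes $Q_J$ of $V$ above $pS$, each of height one by flatness of $V$ over $S$, satisfying $Q_J\cap R_i = P_{i,1}$ for $i\in J$ and $Q_J\cap R_i = P_{i,2}$ otherwise.

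To test regularity at $Q_J$, I will use the induced tensor factorization
\[
V_{Q_J} \;\cong\; \bigotimes_{i\in J} (R_i)_{P_{i,1}} \quad (\text{tensor over }S_{pS}),
\]
the factors with $i\notin J$ collapsing to $S_{pS}$ because $(R_i)_{P_{i,2}} = S_{pS}$. For $|J|=0$ this is $S_{pS}$, giving $(Q_\emptyset\:|\:p)$; for $|J|=1$ it is the DVR $(R_i)_{P_{i,1}}$, giving $(Q_{\{i\}}\:|\:\omega_i-h_i)$ once $\omega_i-h_i$ is identified as a uniformizer. The hard case is $|J|\geq 2$, where I must show that the tensor product of two or more ramified DVRs over $S_{pS}$ is not regular.

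The main obstacle is this embedding-dimension computation. My approach is first to show that $\omega_i-h_i$ is a uniformizer of $(R_i)_{P_{i,1}}$ with $p = u_i(\omega_i-h_i)^{p-1}$ for a unit $u_i$. Applying the binomial theorem to $\omega_i^k = (h_i+(\omega_i-h_i))^k$ together with the hockey-stick identity yields
\[
p\tau_i \;=\; \sum_{j=0}^{p-1}\binom{p}{j+1}h_i^{p-1-j}(\omega_i-h_i)^j.
\]
Comparing valuations in $(R_i)_{P_{i,1}}$, using that $v(\tau_i-h_i^{p-1})=0$ (since $\tau_i$ lies in the maximal ideal while $h_i^{p-1}$ is a unit), forces $v(\omega_i-h_i)=1$. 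Hence in $V_{Q_J}$ the maximal ideal is generated by $\{\omega_i-h_i:i\in J\}$, and the cross-relations $u_i(\omega_i-h_i)^{p-1} = u_j(\omega_j-h_j)^{p-1}$ all lie in $Q_J^{p-1}V_{Q_J}\subseteq Q_J^2 V_{Q_J}$ since $p\geq 3$. Therefore $Q_J V_{Q_J}/Q_J^2 V_{Q_J}$ has dimension $|J|\geq 2$ over the residue field, so $V_{Q_J}$ is singular. This yields $r+1$ non-singular primes of the form $(Q\:|\:p)$ or $(Q\:|\:\omega_i-h_i)$, and at most $2^r-r-1$ (possibly) singular primes $Q_{(i_1,\dots,i_l)}$ with $l\geq 2$, as claimed.
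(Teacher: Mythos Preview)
Your approach is valid and genuinely different from the paper's. The paper works with the polynomial presentation $S[W_1,T_1,\ldots,W_r,T_r]\twoheadrightarrow V$ from \Cref{kernel_lem}, lifts each $Q_{i,j}$ to an explicit three-generator prime in $S[W_i,T_i]$, and checks that the $2^r$ sums of such lifts are height-$(2r+1)$ primes containing the kernel; the local generators are then read off directly from the relations $(*)$ quoted from \cite{DK1}. You instead exploit the tensor structure $V\cong R_1\otimes_S\cdots\otimes_S R_r$ and the CRT splitting of each $(R_i)_{pS}/p$ to decompose $V_{pS}/pV_{pS}$ as a product of $2^r$ local Artinian $\ell$-algebras, then analyze each $V_{Q_J}$ via a tensor factorization. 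Your route is more structural and in fact proves more than the lemma requires: you show the primes with $|J|\geq 2$ are \emph{actually} singular (embedding dimension $|J|$), whereas the paper only records them as ``possibly'' singular.

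There is, however, one false step. The assertion $(R_i)_{P_{i,2}}=S_{pS}$ cannot hold: these two DVRs have fraction fields $K_i$ and $L$ respectively, and $[K_i:L]=p>1$. What you actually need, and what is true, is the mod-$p$ statement $(R_i)_{P_{i,2}}/p\cong\ell$; this is exactly your $A_{i,2}=\ell$ from the CRT step, and it suffices to make the $i\notin J$ factors disappear in $V_{Q_J}/pV_{Q_J}$. With that correction your embedding-dimension computation via $V_{Q_J}/pV_{Q_J}\cong\ell[\pi_i:i\in J]/(\pi_i^{p-1})$ goes through unchanged, since $p\in Q_J^{p-1}\subseteq Q_J^2$. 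Two smaller remarks: the identification $V_{Q_J}\cong\bigotimes_i (R_i)_{P_{i,j(i)}}$ is correct but deserves a sentence---it is the localization of $V$ at the multiplicative set generated by the $R_i\setminus P_{i,j(i)}$, and one checks (using that $V$ is finite over $S$) that the only primes surviving are $0$ and $Q_J$, so this localization is already local. And the hockey-stick detour to force $v(\omega_i-h_i)=1$ is unnecessary: this is precisely the content of $(Q_{i,1}\mid\omega_i-h_i)$ from \cite{DK1}, which you already invoke.
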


\begin{proof} We retain the notation from the previous lemma and its proof. First note that there are $2^r$  height one primes in $V$ containing $p$. From \cite{DK1}, Lemma 3.2, we have that $Q_{i,1}:= (p, \omega_i-h_i, \tau_i)R_i$ and $Q_{i,2} :=(p, \omega_i-h_i, \tau_i-c'_i)R_i$ are the only height one primes in $R_i$ lying over $pS$. Moreover, from the proof of Lemma 3.2 in \cite{DK1}, for each $1\leq i\leq r$,we have:
\[
(*)\quad \quad Q_{i,1} = (Q_{i,1} \ | \omega_i-h_i) \quad \textrm{and}\quad Q_{i,2} = (Q_{i,2}\ |\ p).
\]
Let $\widetilde{Q_{i,j}}$ denote the pre-image of $Q_{i,j}$ in $S[W_i, T_i]$ under the natural map. From \Cref{taukernel}, $\widetilde{Q_{i,j}}=(p, W_i-h_i, T_i-q)$ for some $q\in S[W_i]$. It then follows from \Cref{ht1_primes_p} that the ideal generated by $\sum_{i=1}^n\widetilde{Q_{i,j_i}}$ contains the kernel of the natural map from $S[W_1, T_1, \ldots, W_r, T_r]$ to $V$ for any choice of $j_i\in \{1,2\}$. Any such ideal is clearly prime of height $2n+1$ and hence their images in $V$ account for $2^r$ distinct height one primes containing $p$. On the other hand, if $Q_0$ is a height one prime in $V$ containing $p$, then, for each $1\leq i\leq r$, $Q_0\cap R_i = Q_{i,j_i}$ for some $j_i \in \{1,2\}$, since $Q_0\cap R_i$ must be a height one prime containing $p$ ($R_i$ is integrally closed). Thus $Q_0$ contains the ideal generated by $\sum_{i=1}^n Q_{i,j_i}$ and thus $Q_0$ is one of the $2^r$ height one primes we have accounted for. Therefore, there are exactly $2^r$ height one primes in $V$ containing $p$.

\par Now, let $Q\subset V$ be a height one prime containing $p$. If $Q = \langle Q_{1,2}+\cdots + Q_{r,2}\rangle$, then it follows from (*) that $Q_Q = pV_Q$. 
If there is a single $i$ for which $\psi_i-c_i'\notin Q$, then it follows from (*)  that $\kappa_i-h_i$ is a local generator for $Q$. Thus, there are at most $2^r-r-1$ singularities in codimension one in $V$. Finally, if $\{i_1,\dots,i_l\}\subseteq \{1,\dots,r\}$ is such that $i_1<\dots<i_l$, $l\geq 2$ and $\psi_{i_j}-c'_{i_j}\notin Q$ for all $1\leq j\leq l$, then it follows that $Q$ is of the form $(Q\:|\:\kappa_{i_1}-h_{i_1},\dots,\kappa_{i_l}-h_{i_l})$. 
\end{proof}

\begin{rem}
Although the standing assumption in \cite{DK1} requires $S/pS$ to be integrally closed, note that the conclusion and proof of [\cite{DK1}, Lemma 3.2] holds without this additional hypothesis. In particular, if $f\in S^{p\wedge p^2}$ is square free, the conclusion and proof of loc.cit. holds without the hypothesis that $S/pS$ be integrally closed. 
\end{rem}
The following theorem is the crucial case for the main result of this paper. It extends the corresponding results from \cite{DK1}, \cite{PS1}, and \cite{PS2}. 

\begin{theorem}\label{class1thm} Let $S$ be an integrally closed domain and $p$ a prime integer that remains a non-zero prime in $S$. Let $f_1, \ldots, f_r$ be square-free elements in $S$ and $\omega_i^p = f_i$ be $p$-th roots in some field extension of $L$. Suppose that $K :=  L(\omega_1, \ldots, \omega_r)$ is a class one radical tower. Then $R$, the integral closure of $S$ in $K$, is a free $S$-module. In particular, if $S$ is Cohen-Macaulay or an unramified regular local ring, then $R$ is Cohen-Macaulay.
\end{theorem}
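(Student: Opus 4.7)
The strategy is to leverage \Cref{kernel_lem}, which shows the join $V := S[\omega_1, \tau_1, \ldots, \omega_r, \tau_r]$ of the $R_i$'s is a free $S$-module of rank $p^r$ sitting inside $R$, and then build up to $R$ by adjoining explicit correction elements that resolve the height one singular primes of $V$ identified in \Cref{ht1_primes_p}. Away from $p$, \Cref{HK} (applied after inverting $p$) gives that $A[\tfrac{1}{p}] = S[\omega_1, \ldots, \omega_r][\tfrac{1}{p}]$ is integrally closed and free over $S[\tfrac{1}{p}]$; hence $R[\tfrac{1}{p}] = V[\tfrac{1}{p}]$, and any correction needed to produce $R$ from $V$ occurs only at height one primes of $V$ over $pS$. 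By \Cref{ht1_primes_p}, there are $2^r$ such primes, of which at most $2^r - r - 1$ are singular, namely the $Q_{(i_1,\ldots,i_l)}$ with $l \geq 2$.

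For each index set $I = \{i_1, \ldots, i_l\}$ with $l \geq 2$, I would construct an element $y_I \in R$ whose local behavior gives a single generator for $Q_{I}$, thereby resolving the singularity at that prime. Using the class one relation $f_i = h_i^p + p^2 g_i$ (and the derived identity $(\omega_i - h_i)^p \in p\, S[\omega_i]$), natural candidates are built from suitable products or quotients involving the $\omega_i - h_i$, the $\tau_i$, and powers of $p$, with integrality over $V$ verified by explicit monic relations analogous to the quadratic relation satisfied by each $\tau_i$ in \Cref{taukernel}. Once the $y_I$ are shown to be integral and to resolve all the listed singular primes, \Cref{correctdegree}(iii) implies that $V[\{y_I\}]$ is regular in codimension one; being birational to $V \subseteq R$, integral over $S$, and satisfying $S_2$, it must equal $R$.

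The main obstacle will be the explicit construction of the $y_I$: they must simultaneously be integral over $V$, locally generate the intended singular prime, and combine with the free basis of $V$ from \Cref{kernel_lem} to yield a free $S$-basis of $R$. The class one hypothesis, which provides the crucial $p^2$-divisibility, is precisely what is needed to close up the integral equations for the $y_I$'s, but verifying linear independence of the extended basis and handling the combinatorics of all $2^r - r - 1$ possibly singular primes coherently will require care. Once freeness of $R$ over $S$ is established, the Cohen--Macaulay conclusion follows because a finite free module over a Cohen--Macaulay ring is Cohen--Macaulay, and in the unramified regular local case Auslander--Buchsbaum identifies free modules of maximal rank with maximal Cohen--Macaulay modules.
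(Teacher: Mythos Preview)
Your high-level strategy matches the paper's: start from the free join $V$, identify the height-one singular primes over $p$ via \Cref{ht1_primes_p}, adjoin explicit correction elements to obtain an $R_1$-ification, and then conclude by Serre's criterion. However, two concrete steps in your plan do not quite work as stated, and the paper handles them differently.

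First, your proposal to construct one element $y_I$ per subset $I$ with $|I|\geq 2$, giving ``a single generator for $Q_I$,'' is not how the paper proceeds. The paper's desingularizing elements are indexed by \emph{pairs}: $\eta_{ij}:=p^{-1}(\omega_i-h_i)^{p-2}(\omega_j-h_j)$, which satisfies $X^{p-1}-(\tau_i-c_i')^{p-2}(\tau_j-c_j')$. To resolve $Q_{(i_1,\ldots,i_l)}$ one adjoins $\eta_{i_1i_2},\ldots,\eta_{i_1i_l}$ successively, each step dropping one local generator; a single element would not collapse $l$ local generators to one.

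Second, and more seriously, you assert that $V[\{y_I\}]$ ``satisfies $S_2$,'' and this is the crux you have not addressed. A finite birational extension of a free $S$-module need not be $S_2$, and the paper does \emph{not} attempt to prove $S_2$ for its $R_1$-ification $\mathscr{R}V$. Instead, it takes a two-step approach: after building $\mathscr{R}V$, it writes down an explicit $S$-free module $\langle\mathscr{V}\rangle_S$ of rank $p^r$, where $\mathscr{V}=\bigcup_k\mathscr{V}_k$ and $\mathscr{V}_k$ consists of the elements $p^{-k}\prod_i(\omega_i-h_i)^{j_i}$ with $(p-1)k\leq\sum j_i<(p-1)(k+1)$. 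The real work is then the purely algebraic verification that $\langle\mathscr{V}\rangle_S$ is closed under multiplication (using the relation $(\omega_j-h_j)^p\equiv c_j'p(\omega_j-h_j)\bmod p^2$), so that it is a ring; since it visibly contains $\mathscr{R}V$, \Cref{correctdegree}(iii) gives $R_1$, and freeness gives $S_2$, whence $\langle\mathscr{V}\rangle_S=R$. Your plan conflates the $R_1$-ification with the free model; separating them is exactly what makes the argument close. (The paper also treats $p=2$ separately, where $V$ itself is already integrally closed.)
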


\begin{proof} The second statement follows immediately from the first, so we only concern ourselves with the general case. We first assume $p\geq 3$. Note that $R$ is the integral closure of the ring $S[\kappa_1, \ldots, \kappa_r]$. 
Let $R_i$ be the integral closure of $S[\kappa_i]$ and let $V\subseteq R$ denote the join of the $R_i$. Note that $V$ is $S$-free of rank $p^r$ by \Cref{kernel_lem}. For each $1\leq i\leq r$, choose $h_i$ so that $f_i-h_i^p\in p^2S$. From \Cref{ht1_primes_p}, we have the following data: (a) there are exactly $2^r$ height one primes in $V$ containing $p$ (b) If $Q\subseteq V$ is a height one prime containing $p$, then the non singular primes are either of the form $(Q\:|\:p)$ or $(Q\:|\:\kappa_i-h_i)$ (c) The (possibly) singular ones are of the form $Q_{(i_1,\dots,i_l)}:=(Q\:|\:\kappa_{i_1}-h_{i_1},\dots,\kappa_{i_l}-h_{i_l})$ for some $\{i_1,\dots,i_l\}\subseteq \{1,\dots,r\}$, $i_1<\dots<i_l$ and $l\geq 2$. (d) There are at most $2^r-r-1$ singularities in codimension one in $V$.
\par Note that from Observation \ref{correctdegree}(iii), $V$ is regular in codimension one outside of the $Q_{(i_1,\dots,i_l)}$. We now identify an $\mathrm{R}_1$-ification of $V$. For $i,j\in \{1,\dots,n\}$, $i<j$, define \[\eta_{ij}:=p^{-1}(\kappa_i-h_i)^{p-2}(\kappa_j-h_j)\in K.\]

\noindent
Then for $X$ an indeterminate over $V$, $\eta_{ij}$ satisfies the integral equation
\[v_{ij}(X):=X^{p-1}-(\psi_i-c'_i)^{p-2}(\psi_j-c'_j)\in V[X].\]

\noindent
To de-singularize $Q_{(i_1,\dots,i_l)}$ consider the finite birational extension $V\hookrightarrow V_{(i_1,\dots,i_l)}:=V[\eta_{i_1i_2},\dots,\eta_{i_1i_l}]$. From \ref{l_imodP} we have \footnote{If $R$ is a ring of characteristic $p$ and $X,Y$ indeterminates over $R$, $X^{p-1}-Y^{p-1}=\prod_{i=1}^{p-1} (X+iY)$.} 
\begin{align*}
 v_{i_1i_2}(X)  &\equiv X^{p-1}-(h_{i_1}^{p-2}h_{i_2})^{p-1} \text{ mod($Q_{(i_1,\dots,i_l)}V[X])$}
 \\
 &\equiv \prod_{k=1}^{p-1} (X+kh_{i_1}^{p-2}h_{i_2}) \text{ mod($Q_{(i_1,\dots,i_l)}V[X])$}
 \end{align*}
Since $S_{(p)}$ is universally catenary, it follows that height one primes in $V[\eta_{i_1i_2}]$ lying over $Q_{(i_1,\dots,i_l)}$ are of the form $$Q_{[(i_1,\hat{i_2},\dots,i_l)|\:k]}:=(Q_{(i_1,\dots,i_l)},\eta_{i_1i_2}+kh_{i_1}^{p-2}h_{i_2})$$ for $1\leq k\leq p-1$. 
The point is that $Q_{[(i_1,\hat{i_2},\dots,i_l)|\:k]}$ locally has one less generator: it is of the form $$(Q_{[(i_1,\tilde{i_2},\dots,i_l)|\:k]}\:|\:(\kappa_{i_1}-h_{i_1},\widehat{\kappa_{i_2}-h_{i_2}},\dots,\kappa_{i_l}-h_{i_l})).$$ 

\noindent
To see this, note that $\prod_{i=1, i\neq k}^{p-1}(\eta_{i_1i_2}+ih_{i_1}^{p-2}h_{i_2})\notin Q_{[(i_1,\hat{i_2},\dots,i_l)|\:k]}$ so that $\eta_{i_1i_2}+kh_{i_1}^{p-2}h_{i_2}$ is locally a redundant generator. Also, since $(\kappa_{i_1}-h_{i_1})\cdot \eta_{i_1i_2}=(\kappa_{i_2}-h_{i_2})(\psi_{i_1}-c'_{i_1})$ and $(\psi_{i_1}-c'_{i_1})\notin Q_{[(i_1,\hat{i_2},\dots,i_l)|\:k]}$, $\kappa_{i_2}-h_{i_2}$ is a redundant generator locally.
\\
Proceeding inductively, it is clear that in $V_{(i_1,\dots,i_l)}$ all height one primes lying over $Q_{(i_1,\dots,i_l)}$ (at most $(p-1)^{l-1})$ are non-singular. Now set \[\mathscr{R}V:=V[\{p^{-1}(\kappa_{i_1}-h_{i_1})^{p-2}(\kappa_{i_2}-h_{i_2})\}_{\{i_1,i_2\in\{1,\dots,r\}\:|\:i_1<i_2\}}]\]
For all possible $\{i_1,\dots,i_l\}\subseteq \{1,\dots,r\}$, $i_1<\dots<i_l$ and $l\geq 2$, $V\hookrightarrow V_{(i_1,\dots,i_l)}\hookrightarrow \mathscr{R}V$ are finite birational extensions. From Observation \ref{correctdegree}(iii), $\mathscr{R}V$ is an $\mathrm{R}_1$-ification for $V$.
\par We now identify a finite birational overring of $\mathscr{R}V$ that is $S$-free. This ring would then inherit $R_1$ from $\mathscr{R}V$ by Observation \ref{correctdegree}(iii) and the proof would be complete, since a free $S$-module also satisfies Serre's condition $S_2$. The rest of the proof concerns identifying this overring.
\par Note that $A=S[\kappa_1,\dots,\kappa_r]$ is $S$-free of rank $p^r$ with a basis given by \[F:=\{\prod_{i=1}^n(\kappa_i-h_i)^{j_i}\:|\: 0\leq j_i\leq p-1\}\]
For each $1\leq i\leq r$, define $\Gamma_i:F\rightarrow \mathbb{N}\cup\{0\}$ by $\Gamma_i((\kappa_1-h_1)^{j_1}\dots(\kappa_i-h_i)^{j_i}\dots (\kappa_r-h_r)^{j_r})=j_i$ and \[\Gamma:F\rightarrow (\mathbb{N}\cup\{0\})^r, \; f\mapsto (\Gamma_1(f),\dots,\Gamma_r(f))\]
Let $\Omega: (\mathbb{N}\cup\{0\})^r\rightarrow \mathbb{N}\cup \{0\}$ be the map sending $(x_1,\dots,x_r)\mapsto x_1+\dots+x_r$. For every $0\leq k\leq r$, set \[\mathscr{V}_k:= \{p^{-k}\cdot m\:|\: m\in (\Omega\Gamma)^{-1}([(p-1)k,(p-1)(k+1)))\}\]
and $\mathscr{V}:=\cup_{0\leq k\leq r}\mathscr{V}_k$. By definition, the sets $\mathscr{V}$ and $F$ are in bijection and it follows that $\langle\mathscr{V}\rangle_S$ is $S$-free of rank $p^r$. Note that $A\subseteq \langle\mathscr{V}\rangle_S$ and the ring generators of $\mathscr{R}V$ over $A$ are all in $\mathscr{V}$. Suppose that $\langle\mathscr{V}\rangle_S$ is a $S$-algebra. Then $\mathscr{R}V\hookrightarrow \langle\mathscr{V}\rangle_S$ would be a birational module finite map of rings so that by Observation \ref{correctdegree}(iii), $R=\langle\mathscr{V}\rangle_S$.
 \par Thus, it only remains to be shown that $\langle\mathscr{V}\rangle_S$ is a $S$-algebra. We first note that it is an $A$-module. To see this, it suffices to show that multiplication by ring generators of $A$ over $S$ define an endomorphism on $\langle\mathscr{V}\rangle_S$. Set $E_i:=\langle\cup_{0\leq k\leq i}\mathscr{V}_k\rangle_A$ for each $0\leq i\leq r$. We proceed by induction on $i$. Clearly, $E_0\subseteq E$. Now assume that $E_{i-1}\subseteq E$ for some $1\leq i\leq r$. Consider $p^{-i}\cdot m\in \mathscr{V}_i$ and $\kappa_j-h_j$ for some $1\leq j\leq r$. If $\Gamma_j(m)\neq p-1$, then clearly $(\kappa_j-h_j)\cdot p^{-i}\cdot m\in \langle\mathscr{V}\rangle_S$. Suppose that $\Gamma_j(m)=p-1$. Let $c_j'$ denote the image in $S[\omega_j]$ of the element $C'$ in \Cref{P6} under the map sending $W\rightarrow \omega_j$, where we take $h=h_j$. From the relation (see Lemma \ref{P6})
 \begin{align}\label{class1EQ1}
 (\kappa_j-h_j)^p  &= \kappa_j^p-h_j^p+c'_jp(\kappa_j-h_j)\nonumber
 \\
 &\equiv c'_jp(\kappa_j-h_j)\:mod\:(p^2A)
 \end{align}
 we see that $(\kappa_j-h_j)\cdot p^{-i}\cdot m\in E_{i-1}\subseteq \langle\mathscr{V}\rangle_S$. Thus $E_i\subseteq \langle\mathscr{V}\rangle_S$ and by induction, $\langle\mathscr{V}\rangle_S$ is an $A$-module. To finish the proof, it suffices to show that the product of two elements from $\mathscr{V}$ lies in $\langle\mathscr{V}\rangle_S$. This is verified below. Consider $x:=p^{-u}m_u\in \mathscr{V}_u$ and $y:=p^{-v}m_v\in \mathscr{V}_v$. If $m_u\cdot m_v\in F$ then $xy\in \mathscr{V}$ and we are done. Suppose $m_u\cdot m_v\notin F$. Write
\[m_um_v=\alpha\cdot \prod_{j=l+1}^r(\kappa_{i_j}-h_{i_j})^{a_j}\]
where $0\leq l\leq r-1$ is such that $a_j\geq p$ for all $l+1\leq j\leq r$ and $\alpha\in F$. For each $l+1\leq j\leq r$, write using \Cref{class1EQ1}
\[(\kappa_{i_j}-h_{i_j})^p=p^2b_{i_j}-pc'_{i_j}(\kappa_{i_j}-h_{i_j})\]
for some $b_{i_j}\in S$.
 Setting $\beta:=\alpha\prod_{j=l+1}^r(\kappa_{i_j}-h_{i_j})^{a_j-p}$, we have that $\beta\in F$ and $\Gamma_j(\beta)\leq p-2$ for all $l+1\leq j\leq r$. We thus have
 \[xy=p^{-(u+v-r+l)}\beta\prod_{j=l+1}^r(pb_{i_j}-c'_{i_j}(\kappa_{i_j}-h_{i_j}))\]
We verify that every monomial in the above expression lies in $\langle\mathscr{V}\rangle_S$. First consider
\[
p^{-(u+v-r+l)}\beta\prod_{j=l+1}^r(pb_{i_j})\in S\cdot p^{-(u+v-2r+2l)}\beta\]
We have 
\[(p-1)u+(p-1)v-(2p-2)(r-l)\leq (p-1)u+(p-1)v-p(r-l)\leq \Omega\Gamma(\beta)\]
so this implies $S\cdot p^{-(u+v-2r+2l)}\beta\subseteq \langle\mathscr{V}\rangle_S$. Next, consider the monomial \[p^{-(u+v-r+l)}\beta\prod_{j=l+1}^r(-1)c'_{i_j}(\kappa_{i_j}-h_{i_j}).\] Now $\beta':=\beta\prod_{j=l+1}^r(\kappa_{i_j}-h_{i_j})\in F$ since $\Gamma_j(\beta)\leq p-2$ for all $l+1\leq j\leq r$. Since $\langle\mathscr{V}\rangle_S$ is an $A$-module, it suffices to show $p^{-(u+v-r+l)}\beta'\in \langle\mathscr{V}\rangle_S$. But this is the case since
\[(p-1)(u+v-r+l)\leq \Omega\Gamma(m_u)+\Omega\Gamma(m_v)-(p-1)(r-l)=\Omega\Gamma(\beta')\]
Finally, for the case of a general monomial, it suffices to show by symmetry that for each $l+1\leq t\leq r-1$ 
\[p^{-(u+v-r+l)}\beta(\prod_{j=l+1}^tpb_{i_j})(\prod_{j=t+1}^r(-1)c'_{i_j}(\kappa_{i_j}-h_{i_j}))\in \langle\mathscr{V}\rangle_S\]
Since $\mathscr{V}\rangle_S$ is an $A$-module, it suffices to show that $p^{-(u+v+2l-r-t)}\beta\prod_{j=t+1}^r(\kappa_{i_j}-h_{i_j})\in \langle\mathscr{V}\rangle_S$. Note that $\beta'=\beta\prod_{j=t+1}^r(\kappa_{i_j}-h_{i_j})\in F$. We have
\begin{align*}
    \Omega\Gamma(\beta')&=\Omega\Gamma(m_u)+\Omega\Gamma(m_v)-p(r-l)+(r-t) \\
    &\geq (p-1)u+(p-1)v-p(r-l)+(r-t)\\
    &=(p-1)(u+v-r)+pl-t \\
    &\geq (p-1)(u+v-r)+pl-t+(p-2)(l-t) \\
    &=(p-1)(u+v-r)+(2p-2)l-(p-1)t
\end{align*}
so that $p^{-(u+v+2l-r-t)}\beta'\in \langle\mathscr{V}\rangle_S$ and thus the proof is complete.

\medskip
Now suppose that $p=2$. From the proof of [\cite{DK1}, Lemma 3.2] we know that the integral closure of $S[\kappa_i]$ is $R_i=S[\tau_i]$ for $\tau_i:= \frac{1}{2}\cdot(\kappa_i+h_i)$. It also tells us that if $f_i=h_i^p+4g_i$, then $\tau_i$ satisfies $l_i(T):=T^2-h_iT-g_i\in S[T]$, where $T$ is an indeterminate over $S$. Since $l_i(T)$ and $l_i'(T)$ are relatively prime over the quotient field of $S/2S$, $2\in S[\tau_i]$ is square free. Finally, we also know that $R_i$ is $S$-free for all $i$. Let $V$ denote the join of the $R_i$, that is $V=S[\tau_1,\dots,\tau_r]$. By \Cref{kernel_lem}, $V$ is a free $S$-module. Note that $V$ is birational to $R$ and satisfies Serre's criterion $(S_2)$. From Observation \ref{correctdegree}(i), we have that $V[1/2]$ is integrally closed. Moreover, $2\in V$ is square-free since for each $2\leq i\leq r$, $l_i(T)$ and $l_i'(T)$ are relatively prime over the quotient field of $S[\tau_1,\dots,\tau_{i-1}]/Q$ for all height one primes $Q\subseteq S[\tau_1,\dots,\tau_{i-1}]$ containing $2$. Thus $V$ is regular in codimension one and hence $V=R$ is a free $S$-module.
 \end{proof}

 \medskip
 Here is the main theorem of this paper.
 
 \begin{thm}\label{class1towers} Let $S$ be an integrally closed domain with fraction field $L$ and $p$ a prime integer that remains a non-zero prime in $S$. Let
$f_1, \ldots, f_r$ be square-free elements such that $K := L(\sqrt[n_1]{f_1}, \ldots, \sqrt[n_r]{f_r})$ is a class one radical tower, where, for each $1\leq i\leq r$, $n_i=pd_i$ and $d_i\in S$ is a unit. Then the integral closure of $S$ in $K$ is a free $S$-module. If $S$ is Cohen-Macaulay or an unramified regular local ring, then the integral closure of $S$ in $K$ is Cohen-Macaulay. 
          \end{thm}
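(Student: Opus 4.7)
The plan is to reduce to Theorem~\ref{class1thm} by separating each radical into a $p$-th root part and a unit-order part, then invoking Proposition~\ref{HK} for the latter. Set $\alpha_i := \sqrt[n_i]{f_i}$ and $\omega_i := \alpha_i^{d_i}$, so that $\omega_i^p = f_i$ and $\alpha_i^{d_i}=\omega_i$; put $L_0 := L(\omega_1, \ldots, \omega_r)$ and let $R_0$ denote the integral closure of $S$ in $L_0$. Since $L \subseteq L_0$ is a class one radical tower by the hypothesis on the $f_i$, Theorem~\ref{class1thm} gives that $R_0$ is a free $S$-module.

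The next step is to apply Proposition~\ref{HK} with base ring $R_0$, radicands $\omega_1, \ldots, \omega_r$, and exponents $d_1, \ldots, d_r$. Once its three hypotheses are verified, the conclusion of Proposition~\ref{HK} yields that $R_0[\alpha_1, \ldots, \alpha_r]$ is integrally closed, and hence equals $R$. The argument of Observation~\ref{correctdegree}(i), applied inductively over $R_0$, shows each $\alpha_i$ has monic minimal polynomial $X^{d_i}-\omega_i$ over the quotient field of $R_0[\alpha_1,\ldots,\alpha_{i-1}]$, so $R$ is $R_0$-free with basis $\{\prod_i \alpha_i^{k_i} : 0 \le k_i < d_i\}$; combining with freeness of $R_0$ over $S$ gives the claimed $S$-freeness of $R$, and the Cohen-Macaulay statement follows.

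The unit condition $d_1\cdots d_r\in R_0^*$ is immediate. For the $\msa$ condition in $R_0$: if a height one prime $P \subseteq R_0$ contained both $\omega_i$ and $\omega_j$ with $i\ne j$, then $P \cap S$ (a height one prime of $S$ by going-down) would contain $f_i$ and $f_j$, contradicting $\msa$ for the $f_i$'s.

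The main technical step, and the expected obstacle, is showing each $\omega_i$ is square-free in $R_0$. Fix $i$ and a height one prime $P$ of $R_0$ containing $\omega_i$, and set $\mathfrak q := P \cap S$; then $f_i \in \mathfrak q$ and the square-free hypothesis makes $f_i$ a uniformizer of $S_\mathfrak q$, while $f_i \in \swp$ forces $p \notin \mathfrak q$, so $p$ is a unit in $S_\mathfrak q$. Eisenstein's criterion applied to $X^p-f_i$ then shows $S_\mathfrak q[\omega_i]$ is a DVR with uniformizer $\omega_i$, totally ramified of degree $p$ over $S_\mathfrak q$. For each $j \ne i$, $\msa$ gives $f_j \notin \mathfrak q$, so $\omega_j^p=f_j$ is a unit in $S_\mathfrak q[\omega_i]$; adjoining such Kummer roots of units with $p$ invertible produces an unramified extension of DVRs at $P$, so the ramification index of $R_{0,P}$ over $S_\mathfrak q$ is exactly $p$ and $\omega_i$ is a uniformizer of $R_{0,P}$, completing the verification.
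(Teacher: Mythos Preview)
Your proof is correct and follows the same two-step strategy as the paper: apply Theorem~\ref{class1thm} to the $p$th-root subfield $L_0$ to get $R_0$ free over $S$, then verify the hypotheses of Proposition~\ref{HK} over $R_0$ for the unit-order roots $\sqrt[d_i]{\omega_i}$, concluding that $R=R_0[\alpha_1,\ldots,\alpha_r]$ is free over $R_0$ and hence over $S$.

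The one place your argument differs in presentation is the square-freeness of $\omega_i$ in $R_0$. The paper inverts $p$, uses Observation~\ref{correctdegree}(i) to identify $(R_0)_Q$ with a localization of $A_0=S[\omega_1,\ldots,\omega_r]$, and then computes explicitly in the polynomial ring $B[X]$ over $B=S[\tfrac{1}{p},\omega_j:j\neq i]$ to show the relevant height-one prime is generated by $\omega_i$. You instead argue via ramification: $S_{\mathfrak q}[\omega_i]$ is a totally ramified DVR over $S_{\mathfrak q}$ by Eisenstein, and adjoining the remaining $\omega_j$ (each a $p$th root of a unit, with $p$ invertible) is \'etale, so $\omega_i$ persists as a uniformizer at $P$. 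The implicit step linking your \'etale extension to $R_{0,P}$ is precisely that $A_0[\tfrac{1}{p}]$ is already integrally closed (Observation~\ref{correctdegree}(i)), which is also what the paper uses. So the two verifications are the same computation in different language, yours being a bit more conceptual and the paper's more hands-on.
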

\begin{proof} Write $n_i = pd_i$, so that $d_i$ is a unit in $S$, for all $1\leq i\leq r$. Note that this condition is the same as assuming $p\mid n_i$ but $p^2\nmid n_i$, if $S$ were local. Set $\omega_i: = (\sqrt[n_i]{f_i})^{d_i}$, so that $\omega _i$ is a $p$th root of the square-free element $f_i$. Let $R_0$ denote the integral closure of $S$ in $K_0 := L(\omega_1, \ldots, \omega _r)$, a class one radical extension of $L$. Then, by Theorem \ref{class1thm}, $R_0$ is free over $S$. We now claim that each $\omega _i$ is square-free in $R_0$. 
Set $A_0 := S[\omega_1, \ldots, \omega_r]$. Since $p$ does not divide any $f_i$, if $Q\subseteq R_0$ is a height one prime containing $\omega_i$, then $p\not \in Q$. Thus, $(R_0)_Q = (R_0[\frac{1}{p}])_Q = (A_0)_{Q\cap A_0}$. Thus, it suffices to show that each $\omega _i$ is square-free in $A_0$. Without loss of generality, we may assume $i = 1$. 

\medskip
\noindent
Let $Q$ be a height one prime in $A_0$ containing $\omega _1$. Since $p\not \in Q$, it suffices to show that $\omega _1$ is square-free in $B[\omega_1]$, where $B := S[\frac{1}{p}, \omega_2, \ldots, \omega_r]$. By Proposition \ref{HK}, $B$ is integrally closed and $f_1$ is square-free in $B$. By Observation \ref{correctdegree}(ii), $B[\omega_1] \cong B[X]/\langle X^p-f_1\rangle$. Thus, $Q$ corresponds to a height two prime $Q'$ in $B[X]$ containing $X$ and $X^p-f_1$, and thus, $Q'$ is a height two prime containing $X$ and $f_1$. It follows that $Q' = \langle X, P\rangle$, where $P\subseteq B$ is a height one prime containing $f_1$. Since $PB_P = f_1B_P$, we have $Q'B[X]_{Q'} = (X, f_1)B[X]_{Q'}$. Therefore, modding out $X^p-f_1$ gives $QB[\omega_1]_Q = \omega_1B[\omega_1]_Q$, which is what we want. 

Now, we may regard each $\sqrt[n_i]{f_i}$ as a $d_i$th root of $\omega _i$, and we write $\sqrt[d_i]{\omega_i} = \sqrt[n_i]{f_i}$. If we let $K_0$ denote the quotient field of $R_0$ and $R$ denote the integral closure of $S$ in $K$, then $R$ is the integral closure of $R_0$ in $K_0(\sqrt[d_1]{\omega_1}, \ldots, \sqrt[d_r]{\omega_r})$. Each $d_i$ is a unit in $R_0$ and by the paragraph above, $\omega _i$ is square-free in $R_0$. Moreover, the going down property in the extension $S\subseteq R_0$ ensures that the set $\{\omega_1, \ldots, \omega_r\}$ satisfies $\msa$ in $R_0$. Thus, by Proposition \ref{HK}, $R = R_0[\sqrt[d_1]{\omega_1}, \ldots, \sqrt[d_r]{\omega_r}]$. It follows that $R$ is a free $R_0$-module. Since $R_0$ is a free $S$-module, we have that $R$ is a free $S$-module, which is what we want. The second statement follows immediately from the first.
\end{proof}

 \section{Applications} In this section we focus our attention on applications of the results from the previous section to the case that $S$ is an unramified regular local ring of mixed characteristic $p > 0$. Our first applications of Corollary \ref{class1towers} deal with cases where we adjoin $n$th roots of elements in $S$ such that the resulting extension need not be a class one radical extension. The resulting integral closure $R$ may not be Cohen-Macaulay, but does have a small Cohen-Macaulay algebra, i.e., a module finite $R$-algebra $\tilde{R}$ such that $\tilde{R}$ is Cohen-Macaulay. 
 
For the theorem below, we assume $S$ to be an unramified regular local ring of mixed characteristic $p$. Let $\{p, x_2, \ldots, x_d\} = \{p, \un{x}\}$ be a minimal generating set for the maximal ideal of $S$. For all $k\geq 1$, we set $T_k(\un{x})  := S[\sqrt[k]{x_2}, \ldots, \sqrt[k]{x_d}]$. We choose $k$th roots of the $x_i$ so that if $k = ab$, then $(\sqrt[k]{x_i})^a = \sqrt[b]{x_i}$, for all $i$. Thus, if $k\mid l$, then $T_k (\un{x})\subseteq T_l(\un{x})$. It follows from Observation \ref{correctdegree}(ii) that $T_k(\un{x})$ is an unramified regular local ring of mixed characteristic $p$, for all $k\geq 1$. Set $W(\un{x}) := \bigcup_{k\geq 1} (T_{k}(\un{x})^{p\wedge p^2}\cap S)$. It is not difficult to see that if we take the union of $W(\un{x})$, as $\{\un{x}\}$ ranges over regular system of parameters for $S$, this set is considerably larger than $\swp$. For example, if $f = m+h^p$, where $m$ is a monomial in $x_2, \ldots, x_d$ , which are part of a regular system of parameter for $S$, then $f \in W(\un{x})$. 

 \begin{thm}\label{app1} Let $S$ be an unramified regular local ring of mixed characteristic $p$. Assume $f_1, \ldots, f_r$ are square-free, satisfy $\msa$ and belong to $W(\un{x})$, for $\un{x} = x_2, \ldots, x_d$ such that $p, x_2, \ldots, x_d$ is a regular system of parameters. Let $\omega_i^{n_i} = f_i$ be roots in some field extension of $L$ such that $p\mid n_i$ and $p^2\nmid n_i$ for each $i$, and  write $R$ for the integral closure of $S$ in $L(\omega_1,\dots,\omega_r)$. Then $R$ admits a small  Cohen-Macaulay algebra.
\end{thm}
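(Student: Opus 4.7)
The plan is to absorb the non-class-one behavior of $\{f_i\}$ by base-changing along a sufficiently ramified $k$-th root extension, so that \Cref{class1towers} applies upstairs. Since each $f_i\in W(\un x)$, I choose a single $k$ with $f_i\in T_k(\un x)^{p\wedge p^2}$ for every $i$; enlarging $k$ if necessary, assume $k = pm$ with $\prod_i d_i\mid m$. Set $B := T_k(\un x)$, an unramified regular local ring with parameters $p, y_2, \dots, y_d$ where $y_j := \sqrt[k]{x_j}$. Since each $f_i$ is square-free in the UFD $S$, factor $f_i = \tilde f_i\cdot\prod_j x_j^{\epsilon_{i,j}}$ with $\epsilon_{i,j}\in\{0,1\}$ and $\tilde f_i$ coprime in $S$ to $p$ and every $x_j$; setting $z_i := \prod_j y_j^{m\epsilon_{i,j}}\in B$, we have $f_i = \tilde f_i z_i^p$ in $B$.

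The key step is to verify that $\{\tilde f_i\}$ with exponents $n_i = pd_i$ determines a class one radical tower over $B$. Any height one prime $Q$ of $S$ containing some $\tilde f_i$ avoids $p$ and every $x_j$ by construction; then $k$ is a unit in $S_Q$, and $S_Q\to B\otimes_S S_Q$ is a product of \'etale rank-$k$ extensions of the DVR $S_Q$, so square-freeness of $\tilde f_i$ passes to $B$. Any height one prime of $B$ containing both $\tilde f_i$ and $\tilde f_j$ would contract to a height one prime of $S$ containing both $f_i$ and $f_j$, which is ruled out by $\msa$ in $S$; hence $\{\tilde f_i\}$ satisfies $\msa$ in $B$. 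For the class one hypothesis, choose $h_i\in B$ with $h_i^p\equiv f_i = \tilde f_i z_i^p\pmod{p^2 B}$; reducing modulo $p$ and using that $B/pB$ is a UFD forces $z_i\mid h_i$ mod $p$, so $h_i = z_i h_i' + p e_i$ for some $h_i', e_i\in B$. Expanding binomially, $h_i^p\equiv z_i^p(h_i')^p\pmod{p^2 B}$, and since $z_i^p$ is coprime to $p^2$ in the UFD $B$, cancellation yields $\tilde f_i\equiv(h_i')^p\pmod{p^2 B}$.

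Now \Cref{class1towers} applied to $B$ with data $(\tilde f_i, n_i)$ produces $\tilde R :=$ integral closure of $B$ in $M(\rho_1,\dots,\rho_r)$, where $M = \mathrm{Frac}(B)$ and $\rho_i^{n_i} = \tilde f_i$; this $\tilde R$ is $B$-free, hence Cohen-Macaulay. The divisibility $d_i\mid m$ makes $\mu_i := \prod_j y_j^{(m/d_i)\epsilon_{i,j}}$ an element of $B$ satisfying $\mu_i^{n_i} = z_i^p$, so taking $\rho_i := \omega_i/\mu_i$ gives a genuine $n_i$-th root of $\tilde f_i$ and $M(\omega_1,\dots,\omega_r) = M(\rho_1,\dots,\rho_r)$. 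Thus $\tilde R$ is the integral closure of $S$ in $M(\omega_1,\dots,\omega_r)$; it contains $R$ (the integral closure of $S$ in the subfield $L(\omega_1,\dots,\omega_r)$), is module-finite over $R$, and serves as the desired small Cohen-Macaulay algebra. I expect the main obstacle to be the $p$-th power verification, where the mod-$p$ divisibility $z_i\mid h_i$ must be promoted to a mod-$p^2$ identity through the binomial expansion; having $k$ simultaneously divisible by $p$ and by each $d_i$ is what lets both this cancellation and the bridge $\omega_i = \rho_i\mu_i$ work at once.
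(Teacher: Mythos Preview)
Your proof is correct and follows essentially the same strategy as the paper: pass to a root extension $T_k(\un x)$ in which each $f_i$ lies in the $p\wedge p^2$ set, strip off the $x_j$-factors, verify via a mod-$p$ divisibility plus binomial argument that the stripped elements $\tilde f_i$ remain in the $p\wedge p^2$ set, and then invoke \Cref{class1towers}. The only differences are organizational: you remove all $x_j$-factors from each $f_i$ simultaneously via the single element $z_i$, and by building the divisibilities $p\mid k$ and $d_i\mid m$ into your choice of $k$ from the start you avoid the paper's second extension $\tilde T = T_k[\sqrt[n_1]{y_1},\dots,\sqrt[n_t]{y_t}]$, instead producing the $n_i$-th roots $\mu_i$ of $\prod_j x_j^{\epsilon_{i,j}}$ already inside $B$.
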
 

\begin{proof} Set $T_k = T_k(\un{x})$, for all $k$. The strategy is to replace $S$ by $T_k$ for some $k$ and apply Theorem \ref{class1thm}. Because we wish to pass to $T_k$ for some $k$, $f_j$ will not be square-free in $T_k$ if $f_j$ is divisible by some $x_i$. So we first need to remove any factor of $x_i$ appearing in any $f_j$. After re-indexing, we may assume that $x_2, \ldots, x_t$ are exactly those $x_i$ appearing as factors among the $f_j$. Because the elements $f_1, \ldots, f_r$ satisfy $\msa$, for any $x_i$ with $2\leq i\leq t$, there is exactly one $f_j$ such that $x_i$ divides $f_j$. We may re-index the $f_j$ to assume $x_i$ divides $f_{i-1}$, for $2\leq i\leq t$, and therefore no $x_i$ divides any $f_j$ if $i > t$. For $2\leq i\leq t$, write $f_{i-1} = x_if'_{i-1}$. 

\medskip
\noindent
Now, by assumption, each $f_j\in T_{k_j}^{p\wedge p^2}$ for some $k_j$, with $1\leq j\leq r$. If we take $k := k_1\cdots k_r\cdot p$, then we have $f_j \in T_k^{p\wedge p^2}$, for all $j$, and moreover, $p\mid k$. We can do this since if $a\mid b$, then $T_a^{p\wedge p^2} \subseteq T_b^{p\wedge p^2}$. 

\medskip
\noindent
We start by removing the factor $x_2$ from $f_1$. We wish to prove that $f_1' \in T_k^{p\wedge p^2}$. Now, by assumption, in $T_k$ we have $f_1 = h^p+bp^2$. 
Thus, \[(x_2f_1') = (\sqrt[p]{x_2})^pf_1' = h^p+bp^2,\] so that in $T_k/pT_k$, we have $(\sqrt[p]{x_2})^pf_1' \equiv h^p$. Since $T_k/pT_k$ is ia UFD, it follows that in $T_k/pT_k$, $f_1'$ is a $p$th power, say $f_1'\equiv h_0^p$. Thus, in $T_k$, $f_1' = h_0^p+ap$, for some $a \in T_k$. Therefore, $f_1 = x_2h_0^p+x_2ap = h^p+bp^2$. It follows that, in $T_k/pT_k$, we have 
\[
0\equiv h^p-x_2h_0^p \equiv (h-\sqrt[p]{x_2}\ h_0)^p,\]
and thus, $h \equiv \sqrt[p]{x_2}\ h_0$ mod $pT_k$, so that $h = \sqrt[p]{x_2}\ h_0+cp$, for some $c\in T_k$. Thus, \[(\sqrt[p]{x_2}\ h_0+cp)^p+bp^2 = x_2h_0^p +ax_2p.\]
Subtracting $x_2h_0^p$ from both sides of this last equation, we have that $p^2$ divides the left hand side of the equation, so $p^2$ divides the right hand side of the equation, giving $p\mid a$. Thus, $f_1' \in T_k^{p\wedge p^2}$, as required.  We now note that the set $C := \{\sqrt[k]{x_2}, f_1', f_2, \ldots, f_r\}$ satisfies $\msa$ in $T_k$. This follows since a height one prime in $T_k$ contracts to a height one prime in $S$, and the set $\{x_2 , f_1', f_2, \ldots, \ldots, f_r\}$ satisfies $\msa$ in $S$. Note however, that for $j\geq 2$, the $f_j$ need not be square-free in $T_k$, since $f_2, \ldots, f_{t-1}$ are divisible by $x_2, \ldots, x_t$, respectively. 

\medskip
\noindent
Now, we repeat the process removing $x_3$ from the element $f_2$. The same argument as above, shows $f_2' \in T_k^{p\wedge p^2}\cap S$ and $\{\sqrt[k]{x_2}, \sqrt[k]{x_3}, f_1', f_2', f_3, \ldots, f_r\}$ satisfies $\msa$ in $T_k$. Continuing this process we eventually arrive at the following set up: We have 
\begin{enumerate}
\item[(a)] $\go, \ldots, \gr \in T_k^{p\wedge p^2}\cap S$ and no $\gj$ is divisible in $S$ (or $T_k$) by any $x_i$. 
\item[(b)] $\gj = f_j$ for $t\leq j\leq r$ and $f_j = x_j\gj$, where $\gj = f_j'$, for $1\leq j\leq t-1$. 
\item[(c)] The set $\{\sqrt[k]{x_2}, \ldots, \sqrt[k]{x_t}, \go, \ldots, \gr\}$ satisfies $\msa$.
\end{enumerate} 

\medskip
\noindent
We now observe that $\go, \ldots, \gr$ are square-free elements in $T_k$, and not divisible by $p$. The latter statement holds, since the $\gj$ are not divisible by $p$ in $S$, and therefore are not divisible by $p$ in $T_k$, since $pS = pT_k\cap S$. To see that $\go, \ldots, \gt$ are square-free in $T_k$, it suffices to show they are square-free in $T_k[\frac{1}{p}]$, since $p$ does not divide any $\gj$. However, the set $\{\sqrt[k]{x_2}, \ldots, \sqrt[k]{x_d}, \go, \ldots, \gr\}$ satisfies $\msa$, so \Cref{HK} gives us what we want.

\medskip
\noindent
 For ease of notation, set $\sqrt[k]{x_i} := y_i$, for $2\leq i\leq d$. We make one more ring extension. Set $\tilde{T} := T_k[\sqrt[n_1]{y_1}, \ldots, \sqrt[n_t]{y_t}]$ and let $\tilde{E}$ denote its quotient field. Here we are choosing $\sqrt[n_i]{y_i}$ so that $(\sqrt[n_i]{y_i})^k = \sqrt[n_j]{x_i}$. Then $\tilde{T}$ is an unramified regular local ring of mixed characteristic $p$, and since $\go, \ldots, \gr$ are square-free in $T_k$, satisfy $\msa$, and each $\gj \in T_k^{p\wedge p^2}$, they remain such in $\tilde{T}$. It follows that $F :=  \tilde{E}(\sqrt[n_1]{\go}, \ldots, \sqrt[n_r]{\gr})$ is a class one radical extension of $\tilde{E}$. Thus, by our assumption on the $n_j$, if we let $\tilde{R}$ denote the integral closure of $\tilde{T}$ in 
$F$, then Corollary \ref{class1towers} implies that $\tilde R$ is Cohen-Macaulay. The proof of the theorem will be complete, once we observe that $R\subseteq \tilde{R}$, and for this it suffices to see that $L(\omega_1, \ldots, \omega_r)\subseteq F$. 

\medskip
\noindent
To finish, we note that, since $f_j = x_{j+1}\gj$, for $1\leq j\leq t-1$ and $f_j = \gj$, for $t\leq j\leq r$,  \[L(\omega_1, \ldots, \omega_r)\subseteq  
L(\sqrt[n_1]{x_2}, \ldots, \sqrt[n_t]{x_t}, \sqrt[n_1]{\go}, \ldots, \sqrt[n_r]{\gr}).\] Since $(\sqrt[n_i]{y_i})^k = \sqrt[n_j]{x_i}$, $L(\sqrt[n_1]{x_2}, \ldots, \sqrt[n_t]{x_t}, \sqrt[n_1]{\go}, \ldots, \sqrt[n_r]{\gr}) \subseteq F$, which gives what we want. 
\end{proof}

\begin{rem} In Theorem \ref{app1}, we can remove the restrictions that $f_1, \ldots, f_r$ be square-free and satisfy $\msa$, by allowing extra factors involving $x_2, \ldots,x_d$. For example, if $f_1, \ldots, f_r \in W(\un{x})$, and each $f_j = m_j\gj$, where $m_j$ is a monomial in $x_2 \ldots, x_d$ and $\go, \ldots, \gr$ are square-free and satisfy $\msa$, the same proof will show that $\go, \ldots, \gr$ belong to $W(\un{x})$ and that there is a finite extension $\tilde{T}$ of $R$ such that $\tilde{T}$ is an unramified regular local ring of mixed characteristic $p$ with quotient field $\tilde{E}$, so that $F := \tilde{E}(\sqrt[n_1]{\go} \ldots, \sqrt[n_r]{\gr})$ is a class one radical extension of $\tilde{E}$.\footnote{In this scenario, $\go, \ldots, \gr$ will not be square-free and satisfy $\msa$ until all $x_i$ have been removed from all $f_j$.} Thus, the integral closure $\tilde{R}$ of $\tilde{T} $ in $\tilde{E}$ is Cohen-Macaulay, and therefore a small Cohen-Macaulay algebra for $R$.  
\end{rem}

\begin{cor}\label{app2} Let $S$ be an unramified regular local ring of mixed characteristic $p$, and $g_1, \ldots, g_s \in S$, not divisible by $p$ (and not necessarily square-free) and suppose that in $S$ we can write each $g_i := q_{i_1}^{c_{i_1}}\cdots q_{i_t}^{c_{i_t}}$ as a product of primes $q_{i_j}$. Let $\omega_i^{n_i} = g_i$ be roots in some field extension of $L$ such that $p\mid n_i$ and $p^2\nmid n_i$ for each $i$. Let $R$ the integral closure of $S$ in $K := L(\omega_1,\dots,\omega_r)$. Suppose there exists $W(\un{x})$ as in Theorem \ref{app1} such that each $q_{i_j} \in W(\un{x})$. Then $R$ admits a small Cohen-Macaulay algebra. 
\end{cor}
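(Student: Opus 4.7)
The plan is to reduce Corollary~\ref{app2} directly to Theorem~\ref{app1} by adjoining roots of the distinct prime factors of the $g_i$ instead of roots of the $g_i$ themselves. The hypothesis that each $q_{i_j} \in W(\un{x})$, combined with the fact that distinct prime elements in the UFD $S$ automatically satisfy $\msa$, makes the collection of prime factors directly suited to the hypotheses of Theorem~\ref{app1}.

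To carry this out, I will first enumerate the distinct primes appearing across all the given factorizations as $q_1, \ldots, q_m$, and rewrite each $g_i$ uniformly as $g_i = \prod_{\ell=1}^m q_\ell^{c_{i,\ell}}$, where $c_{i,\ell} = 0$ when $q_\ell \nmid g_i$. I then set $N := \operatorname{lcm}(n_1, \ldots, n_r)$ and observe that, since each $n_i = p d_i$ with $\gcd(p, d_i) = 1$, one has $N = p \cdot \operatorname{lcm}(d_1, \ldots, d_r)$ with $p \nmid \operatorname{lcm}(d_1, \ldots, d_r)$, so $p \mid N$ and $p^2 \nmid N$. Choosing roots $\eta_\ell$ in a fixed algebraic closure with $\eta_\ell^N = q_\ell$, I set $\omega_i := \prod_{\ell=1}^m \eta_\ell^{c_{i,\ell} N / n_i}$, which is legitimate because $n_i \mid N$ and yields $\omega_i^{n_i} = g_i$. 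Since the statement allows any choice of $\omega_i$ in some field extension of $L$, I may take these specific ones; this secures the key containment $L(\omega_1, \ldots, \omega_r) \subseteq F$, where $F := L(\eta_1, \ldots, \eta_m)$.

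Next, I apply Theorem~\ref{app1} to the system $q_1, \ldots, q_m$ with common exponent $N$: each $q_\ell$ is square-free as a prime of $S$, the set $\{q_1, \ldots, q_m\}$ satisfies $\msa$ because distinct prime elements of the UFD $S$ generate distinct height-one primes, each $q_\ell \in W(\un{x})$ by hypothesis, and the exponent $N$ satisfies $p \mid N$, $p^2 \nmid N$. Theorem~\ref{app1} therefore produces a small Cohen--Macaulay algebra $\hat{R}$ for the integral closure $\tilde{R}$ of $S$ in $F$. Because $L(\omega_1, \ldots, \omega_r) \subseteq F$, we have $R \subseteq \tilde{R} \subseteq \hat{R}$, and the containment $R \subseteq \hat{R}$ is module-finite since both are module-finite over $S$. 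Hence $\hat{R}$ is a small Cohen--Macaulay algebra for $R$.

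The only point requiring care, which is really a bookkeeping matter rather than a genuine obstacle, is the compatible selection of roots: one must pick the $\omega_i$ as specific products of the $\eta_\ell$ so as to avoid having to adjoin additional roots of unity in order to realize $L(\omega_1, \ldots, \omega_r) \subseteq F$. Once this compatible choice is fixed, the reduction to Theorem~\ref{app1} is immediate.
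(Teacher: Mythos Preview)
Your proposal is correct and follows essentially the same route as the paper: both arguments embed $K$ into the field $F$ obtained by adjoining roots of the distinct prime factors $q_\ell$ (expressing each $\omega_i$ as a product of such roots), observe that these primes are square-free, satisfy $\msa$, and lie in $W(\un{x})$, and then invoke Theorem~\ref{app1}. Your use of a single exponent $N=\operatorname{lcm}(n_1,\dots,n_r)$ is a slightly cleaner bookkeeping device than the paper's (which keeps the individual $n_i$), and your explicit remark about the compatible choice of roots makes overt an assumption the paper leaves implicit.
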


\begin{proof} We first point out that our elements $g_j$ are products of primes and not a unit times a product of primes. We begin with the reduction used in the proof of Theorem 6.1 in \cite{HK}, which we repeat for the reader's convenience. Write each $g_i := q_{i_1}^{d_{i_1}n_i+e_i} \cdots q_{i_t}^{d_{i_t}n_i+e_{i_t}}$. Then for \[\gamma_i := q_{i_1}^{d_{i_1}}(\sqrt[n_i]{q_{i_1}})^{e_{i_1}}\cdots q_{i_t}^{d_{i_t}}(\sqrt[n_i]{q_{i_t}})^{e_{i_t}},\] $\gamma_i ^{n_i} = g_i$, so $S[\sqrt[n_i]{g_i}] \subseteq S[\sqrt[n_i]{q_{i_1}},\ldots, \sqrt[n_i]{q_{i_t}}]$, and hence $L(\sqrt[n_i]{g_i})\subseteq L(\sqrt[n_i]{q_{i_1}},\ldots, \sqrt[n_i]{q_{i_t}})$. Doing this for each $g_i$ shows $K$ is contained in $F := L(\sqrt[n_1]{q_1},\ldots, \sqrt[n_r]{q_r})$. Thus, $R$ is contained in the integral closure, say $T$, of $S$ in $F$. Since $q_1, \ldots, q_r$ are distinct primes in $S$, they are square-free and satisfy $\msa$, and by assumption, they belong to $V$. Thus, by \Cref{app1}, $T$ admits a small Cohen-Macaulay algebra, which in turn, is also a small Cohen-Macaulay algebra for $R$, which completes the proof. 
\end{proof}

\begin{cor} Let $S$ be an unramified regular local ring of mixed characteristic $p$ and $g_1, \ldots, g_r \in S$, square-free and satisfying $\msa$. Let $n_1, \ldots, n_r$ be as in Theorem \ref{app1}. Assume each $g_i = m_i+b_ip^2$, where $m_i$ is a monomial in $x_2, \ldots,x_d$ and $b_i \not = 0$. Then $R$ admits a small Cohen-Macaulay algebra.
\end{cor}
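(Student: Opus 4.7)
The plan is to deduce this corollary as a direct application of Theorem~\ref{app1}. The hypotheses on the $g_i$ already supply that they are square-free and satisfy $\msa$, and the $n_i$ are of the required form ($p \mid n_i$ but $p^2 \nmid n_i$). Consequently, the only condition that remains to be checked is that each $g_i$ lies in the set $W(\un{x})$ associated to the regular system of parameters $p, x_2, \ldots, x_d$.

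To verify this, I would pass to the extension $T_p(\un{x}) = S[\sqrt[p]{x_2}, \ldots, \sqrt[p]{x_d}]$, which is itself an unramified regular local ring of mixed characteristic $p$ by Observation~\ref{correctdegree}(ii). In this ring, each $x_j$ is manifestly a $p$-th power, namely $x_j = (\sqrt[p]{x_j})^p$. Writing $m_i = x_2^{a_{i2}} \cdots x_d^{a_{id}}$, one then has
\[
m_i = \bigl((\sqrt[p]{x_2})^{a_{i2}} \cdots (\sqrt[p]{x_d})^{a_{id}}\bigr)^p
\]
in $T_p(\un{x})$. Since $g_i = m_i + b_i p^2$, this gives $g_i \equiv m_i \bmod p^2 T_p(\un{x})$, and in particular $g_i \in T_p(\un{x})^{p \wedge p^2}$. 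Because $g_i \in S$, we then obtain $g_i \in T_p(\un{x})^{p \wedge p^2} \cap S \subseteq W(\un{x})$.

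With all the hypotheses of Theorem~\ref{app1} now verified, the theorem directly yields a small Cohen-Macaulay algebra for $R$. There is essentially no obstacle in this argument; the only point worth remarking is the clean observation that a monomial in the parameters $x_2, \ldots, x_d$ automatically becomes a $p$-th power after adjoining $p$-th roots of those parameters, which is precisely what the class $W(\un{x})$ is designed to detect, and the hypothesis $g_i \equiv m_i \bmod p^2$ then lifts this congruence to the level needed by Theorem~\ref{app1}.
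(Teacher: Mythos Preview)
Your proposal is correct and follows essentially the same approach as the paper: both arguments observe that a monomial $m_i = x_2^{e_{i2}}\cdots x_d^{e_{id}}$ becomes a $p$-th power in $T_p(\un{x})$, so that $g_i = m_i + b_i p^2 \in T_p(\un{x})^{p\wedge p^2}\cap S \subseteq W(\un{x})$, and then invoke Theorem~\ref{app1}. The paper's own proof is simply a terser version of exactly what you wrote.
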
 

\begin{proof} If $h = x_2^{e_2}\cdots x_d^{e_d}$ is a monomial in $x_2, \ldots, x_d$, then $m = \{(\sqrt[p]{x_1})^{e_1}\cdots (\sqrt[p]{x_d})^{e_d}\}^p$ in $T_p$. 
Thus, each $g_j \in W(\un{x})$, and the result follows from Theorem \ref{app1}. 
\end{proof}.

\begin{ex}\label{examples} The above results provide small Cohen-Macaulay algebras for many non Cohen-Macaulay rings. Here is a concrete example. Consider Example 3.10 in \cite{DK1}. We start with an unramified regular local ring of mixed characteristic $3$, say $S$, and take $x,y\in S$ such that $3,x,y$ form part of a regular system of parameters for $S$. Take $a:=xy^4+9$, $b:=x^4y+9$, $f=ab^2$ and $\omega$ a cube root of $f$. If $R$ is the integral closure of $S$ in the quotient field of $S[\omega]$, then, as shown in \cite{DK1}, $R$ is not Cohen-Macaulay. Set $T:=S[\sqrt[3]{x},\sqrt[3]{y}]$. Then $T$ is an unramified regular local ring such that $3,\sqrt[3]{x},\sqrt[3]{y}$ form a part of a minimal generating set of the maximal ideal of $S$. Then $a,b\in T$ are square free, mutually co-prime and lie in $T^{3\wedge 9}$. By \Cref{class1towers}, the integral closure of $T$ in the quotient field of $T[\sqrt[3]{a},\sqrt[3]{b}]$, say $\tilde{R}$, is Cohen-Macaulay. Since $\tilde{R}$ is a module finite extension of $R$, it is a small Cohen-Macaulay algebra for $R$. 
    \par We note two things about the constructions in this paper: 
    
\noindent (i)  In the above example, $f\in S^{3\wedge 9}$, but the integral closure of $S$ in the quotient field of $S[\omega]$ is not Cohen-Macaulay. So, the square-free hypothesis in \Cref{class1thm} cannot be dropped.
 
 \medskip
 \noindent
 (ii) The integral closure in a general square free $p$-th root tower with elements from $S^{p\wedge p}$ need not be Cohen-Macaulay. Consider Example 4.10 in \cite{PS2}: Setting $S:=\mathbb{Z}[X,Y]_{(p,X,Y)}$ for some prime number $p\geq 3$, take $f_1:=X^{2p}-pX^{2p}+p^2$ and $f_2:=(XY)^p+p(XY)^p+p^2$. Then $f_1,f_2\in S^{p\wedge p}\setminus S^{p\wedge p^2}$ are square free and mutually coprime. If $R$ is the integral closure of $S$ in $L(\sqrt[p]{f_1},\sqrt[p]{f_2})$, then $\projdim_S(R)=1$.
\end{ex}

\begin{ex}
Here is an example illustrating \Cref{class1thm}. Take $S=\mathbb{Z}_3[[X,Y]]$, where $\mathbb{Z}_3$ denotes the $3$-adic integers and $f:=X^3+9$, $g=Y^3+9$. Then $f,g\in S^{3\wedge 9}$ are square free and mutually coprime. Let $L$ denote the fraction field of $S$ and let $R$ be the integral closure of $S$ in $L(\sqrt[3]{f},\sqrt[3]{g})$. Then $R$ is a free $S$-module of rank $9$ with a basis given by $\mathscr{V} = \mathscr{V}_0\cup \mathscr{V}_1\cup \mathscr{V}_2$, where
\[\mathscr{V}_0:=\{1, \sqrt[3]{f}-X, \sqrt[3]{g}-Y\}\]
\[\mathscr{V}_1:=\{3^{-1}(\sqrt[3]{f}-X)^2,3^{-1}(\sqrt[3]{f}-X)(\sqrt[3]{g}-Y),3^{-1}(\sqrt[3]{g}-Y)^2,3^{-1}(\sqrt[3]{f}-X)(\sqrt[3]{g}-Y)^2,3^{-1}(\sqrt[3]{f}-X)^2(\sqrt[3]{g}-Y)\}\]
\[\mathscr{V}_2 =\{9^{-1}(\sqrt[3]{f}-X)^2(\sqrt[3]{g}-Y)^2\}\]
\end{ex}

\section{Concluding remarks} 
\medskip
In this section we make a few remarks concerning our assumptions, and how they might be successfully altered. As mentioned in the introduction, our primary assumption that the elements whose roots we take belong to $\swp$ stems from the success we have had with this assumption in \cite{DK2}, \cite{PS1}, and \cite{PS2}, together with the fact that there is an unramified prime over $p$ with trivial extension of residue fields. In \cite{DK1}, for square-free $f$ not a $p$th power modulo $pS$, i.e., $f\not \in S^{p\wedge p}$, it is noted that $S[\omega]$ is already integrally closed. Unfortunately, as shown in \cite{PS1} and \cite{PS2}, adjoining $p$th roots of even two square-free elements that are not in $S^{p\wedge p}$ presents considerable difficulties. In particular, if $\omega ^p = f$ and $\mu ^p = g$, then $S[\omega, \mu]$ need not be integrally closed, nor need its integral closure be Cohen-Macaulay, when $S$ is regular. However, we can extend Theorem \ref{class1thm} by adjoining multiple elements that are not in $S^{p\wedge p}$ if we assume they are sufficiently independent  modulo $pS$. Suppose that we start with a class one tower over $S$ as in the statement of \Cref{class1thm}. and suppose that $S\hookrightarrow T$ is an integral extension of integrally closed domains such that $T$ is $S$-free, $p\in T$ is a principal prime and the $f_i\in T$ remain square free. Write $E$ for the fraction field of $T$. Then applying \Cref{class1thm} to $T$, the integral closure of $T$ in $E(\sqrt[p]{f_1},\dots,\sqrt[p]{f_r})$ is $T$-free. Therefore, the integral closure of $S$ in $E(\sqrt[p]{f_1},\dots,\sqrt[p]{f_r})$ is $S$-free. This is exactly what happens when we adjoin $p$-th roots of square-free elements \textit{linearly disjoint modulo $p$} alongside a class one tower.

\begin{dfn} Let $F$ denote the fraction field of $S/pS$. 
 The elements $g_1,\dots,g_t$ in $S$ are said to be \textit{linearly disjoint modulo $p$} if $g_i\notin F^p$ for all $i$ and there is an isomorphism of $F$-vector spaces \[F(\sqrt[p]{\bar{g_1}})\otimes_F\dots\otimes_FF(\sqrt[p]{\bar{g_t}})\simeq F(\sqrt[p]{\bar{g_1}},\dots,\sqrt[p]{\bar{g_t}}),\] where $\bar{g_i}$ is the image of $g_i$ modulo $p$.
\end{dfn}

Note that if $S/p$ is integrally closed, then $\bar{g} \not \in F^p$ if and only if $g_i\notin S^p$. The following proposition extends the crucial result Theorem \ref{class1thm} from section three. 

\begin{prop}\label{disjmodp}
Let $S$ be an integrally closed domain with fraction field $L$ such that $p\in S$ is a non-zero principal prime. Let $f_1,\dots,f_r,g_1,\dots,g_t\in S$ be square-free elements satisfying $\mathscr{A}_1$ such that that $L(\sqrt[p]{f_1},\dots,\sqrt[p]{f_r})$ is a class one tower and the $g_i$ are linearly disjoint modulo $p$. Then, the integral closure of $S$ in $K := L(\sqrt[p]{f_1},\dots,\sqrt[p]{f_r},\sqrt[p]{g_1},\dots,\sqrt[p]{g_t})$ is a free $S$-module. In particular, if $S$ is Cohen-Macaulay or an unramified regular local ring, then the integral closure of $S$ in $K$ is Cohen-Macaulay.
\end{prop}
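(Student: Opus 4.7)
The plan is to execute the strategy spelled out in the paragraph immediately preceding the statement: build an intermediate integrally closed extension $T$ of $S$ by adjoining the $g_i$, verify that $T$ still satisfies the hypotheses needed to invoke \Cref{class1thm} on the $f_i$ piece, and then descend freeness from $T$ to $S$. Concretely, take $T$ to be the integral closure of $S$ in $E := L(\sqrt[p]{g_1},\dots,\sqrt[p]{g_t})$. The first (and hardest) step is to show that $T = S[\sqrt[p]{g_1},\dots,\sqrt[p]{g_t}]$, that it is $S$-free of rank $p^t$, and that $p$ remains a principal prime in $T$. Since each $g_i$ is square-free and $\bar{g_i}\notin F^p$ (so in particular $g_i\notin S^{p\wedge p}$), the observation from \cite{DK1} recalled in the concluding section — that adjoining a single $p$-th root of a square-free non-$p$th-power element produces an integrally closed ring — applies. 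Iterating this in a tower and invoking linear disjointness at each stage should yield integral closedness and the $S$-basis $\{\prod_i(\sqrt[p]{g_i})^{a_i} : 0\leq a_i\leq p-1\}$. Moreover $T/pT\cong (S/pS)[X_1,\dots,X_t]/(X_1^p-\bar{g_1},\dots,X_t^p-\bar{g_t})$ is free over $S/pS$, and its localization $F\otimes_{S/pS}(T/pT)\cong F(\sqrt[p]{\bar{g_1}},\dots,\sqrt[p]{\bar{g_t}})$ is a field by linear disjointness; hence $T/pT$ embeds in a field, is a domain, and $pT$ is a principal prime.

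Next, check that the data $(f_1,\dots,f_r)$ transports to $T$: each $f_i$ remains square-free, the set still satisfies $\msa$, and the class one congruences survive. Faithful flatness of $S\hookrightarrow T$ gives $p^2T\cap S = p^2S$, so $f_i\equiv h_i^p \pmod{p^2T}$. For $\msa$, any height one prime $P$ of $T$ containing $f_i$ contracts (by going down) to a height one prime $Q = P\cap S$ containing $f_i$, and the given $\msa$ over $S$ forces $Q$ to avoid the other $f_j$ and all $g_k$, so $P$ cannot contain any other $f_j$ either. Square-freeness of $f_i$ in $T$ splits into two cases. If $Q\neq pS$, then $p$ and every $g_k$ is a unit in $S_Q$, so $T_Q/S_Q$ is \'etale (the polynomials $X_k^p - g_k$ have unit derivatives), giving $PT_P = QT_P = f_i T_P$. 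If $Q = pS$, then by Step 1 there is a unique prime of $T$ over $pS$, namely $pT$; and $f_i/p$ is a unit in $S_{pS}$ because $f_i$ is square-free in $S$, so $f_i T_{pT} = pT_{pT}$.

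With these facts in hand, \Cref{class1thm} applies to $T$ and the class one tower $K = E(\sqrt[p]{f_1},\dots,\sqrt[p]{f_r})$ over $E$, exhibiting the integral closure $R$ of $T$ in $K$ as a free $T$-module. Since $T$ is itself integral over $S$, transitivity of integral closure makes $R$ the integral closure of $S$ in $K$; combined with $S$-freeness of $T$, this yields that $R$ is $S$-free, and the Cohen-Macaulay consequences follow exactly as in \Cref{class1thm}. The principal obstacle throughout is Step 1: proving that adjoining $p$-th roots of several square-free elements — none of them a $p$-th power modulo $p$ — still yields an $S$-free integrally closed domain with $p$ a principal prime. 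Linear disjointness modulo $p$ is precisely the hypothesis that rules out the pathologies (nilpotents or extra primes in $T/pT$) that otherwise obstruct the analogous statements for elements of $S^{p\wedge p}\setminus S^{p\wedge p^2}$, as illustrated in \Cref{examples}(ii) and in \cite{PS1}, \cite{PS2}.
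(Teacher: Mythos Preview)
Your proposal is correct and follows essentially the same route as the paper: set $T=S[\sqrt[p]{g_1},\dots,\sqrt[p]{g_t}]$, use linear disjointness modulo $p$ to see that $pT$ is prime, conclude $T$ is $S$-free and integrally closed, check the $f_i$ still define a class one tower over $T$, and apply \Cref{class1thm}. The paper is slightly more efficient in two places: rather than iterating the one-variable result from \cite{DK1}, it directly invokes \Cref{correctdegree}(i),(ii) to get the presentation of $T$ and integral closedness of $T[1/p]$, then combines $S$-freeness ($S_2$) with the single height one prime $pT$ over $p$ to get $R_1$; and for square-freeness of the $f_i$ in $T$ it simply inverts $p$ (noting $f_i\notin pS$ since $f_i\in\swp$) and appeals to \Cref{HK}, which makes your case $Q=pS$ vacuous.
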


\medskip
\noindent

\begin{proof}
By \Cref{correctdegree}(ii), $T:=S[\sqrt[p]{g_1},\dots,\sqrt[p]{g_t}]\simeq S[X_1,\dots,X_t]/(X_1^p-g_1,\dots,X_t^p-g_t)$ where the $X_i$ are indeterminates over $S$. The linear disjointness modulo $p$ hypothesis on the $g_i$ implies that $p\in T$ is a principal prime. By \Cref{correctdegree}(i), $T[1/p]$ is integrally closed. Since $T$ is $S$-free, these facts imply that $T$ is integrally closed. To conclude that the $f_i\in T$ are square free, we can assume $p\in S$ is a unit, since $f_i\notin pS$ for all $i$. Applying \Cref{HK} to $S[1/p]$, we see that the $f_i\in T$ are all square-free. Thus the $f_i$ define a class one tower over $T$ and hence by \Cref{class1thm}, the integral closure of $T$ in $L(\sqrt[p]{f_1},\dots,\sqrt[p]{f_r},\sqrt[p]{g_1},\dots,\sqrt[p]{g_t})$ is a free $T$-module and hence a free $S$-module.
\end{proof}

\begin{rem} With Proposition \ref{disjmodp} in hand, it is not difficult to see that Theorem \ref{class1towers}  can be extended in a similar way. \end{rem} 

When $S$ is a complete unramified regular local ring of mixed characteristic $p >0$, whose residue field is $F$-finite, there is an alternate way of dealing with elements that do not belong to $S^{p\wedge p}$, as the following proposition shows.

\begin{prop}\noindent\label{pwedge_p2}
Let $(S,\m,k)$ be a complete unramified regular local ring of mixed characteristic $p>0$ with $k$ $F$-finite. Then there exists an unramified regular local ring  $T$ of mixed characteristic $p>0$ such that $T$ is finite over $S$ and $S\hookrightarrow T^{p\wedge p} $.
\end{prop}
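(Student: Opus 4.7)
The plan is to construct $T$ explicitly by enlarging the residue field of $S$ along Frobenius and adjoining $p$-th roots of a regular system of parameters. By Cohen's structure theorem, I may identify $S$ with $V[[x_2,\ldots,x_d]]$, where $V$ is the Cohen ring of $k$, i.e., a complete DVR of mixed characteristic $(0,p)$ with uniformizer $p$ and residue field $k$. Because $k$ is $F$-finite, the extension $k\hookrightarrow k^{1/p}$ is finite. Let $V'$ denote the Cohen ring of $k^{1/p}$; by the standard lifting theory of Cohen rings, I may choose a local homomorphism $V\hookrightarrow V'$ inducing the inclusion $k\hookrightarrow k^{1/p}$ on residue fields. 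The finiteness of $[k^{1/p}:k]$ combined with $p$-adic completeness then forces $V'$ to be module-finite over $V$.

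Next, I would set $T:=V'[[y_2,\ldots,y_d]]$ with the $S$-algebra structure sending $p\mapsto p$ and $x_i\mapsto y_i^p$. Since $(p,y_2,\ldots,y_d)$ is a regular system of parameters, $T$ is a complete unramified regular local ring of mixed characteristic $p$ (and in particular a domain). To check that $T$ is module-finite over $S$, I would exhibit the direct-sum decomposition
\[
T \;=\; \bigoplus_{0\le r_i< p} y_2^{r_2}\cdots y_d^{r_d}\cdot V'[[x_2,\ldots,x_d]],
\]
obtained by grouping each power series in the $y_i$ according to the residues of its exponents modulo $p$ and using $x_i=y_i^p$. This displays $T$ as free of rank $p^{d-1}$ over $V'[[x_2,\ldots,x_d]]$, and the latter is module-finite over $S=V[[x_2,\ldots,x_d]]$ because $V'$ is module-finite over $V$.

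Finally, I would verify that the image of $S$ in $T$ lies in $T^{p\wedge p}$ (extending the notation of Definitions 2.1(ii) to $T$ in the obvious way). Given $f\in S$, write $f=\sum_\alpha v_\alpha x^\alpha$ with $v_\alpha\in V$; the image of $f$ in $T/pT\cong k^{1/p}[[y_2,\ldots,y_d]]$ is $\sum_\alpha \bar v_\alpha y^{p\alpha}$ with $\bar v_\alpha\in k$. Since $T/pT$ has characteristic $p$ and each $\bar v_\alpha$ admits a $p$-th root $\bar v_\alpha^{1/p}\in k^{1/p}$, the Frobenius identity
\[
\sum_\alpha \bar v_\alpha y^{p\alpha}=\Bigl(\sum_\alpha \bar v_\alpha^{1/p}y^\alpha\Bigr)^p
\]
holds in $T/pT$. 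Lifting the coefficients $\bar v_\alpha^{1/p}$ arbitrarily to $V'$ produces $h\in T$ with $h^p\equiv f\pmod{pT}$, so $f\in T^{p\wedge p}$.

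I expect the main technical point to be the structural setup rather than any single calculation: invoking functoriality of Cohen rings to produce $V\hookrightarrow V'$ in the potentially non-perfect case, and confirming that the completed extension $T$ is genuinely a finite $S$-module (not merely integral). Once these are in hand, the inclusion $S\subseteq T^{p\wedge p}$ falls out of the one-line Frobenius computation displayed above.
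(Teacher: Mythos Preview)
Your proposal is correct and takes essentially the same approach as the paper: both enlarge the residue field from $k$ to $k^{1/p}$ and adjoin $p$-th roots of the non-$p$ members of a regular system of parameters, yielding an unramified regular local ring $T$ with $T/pT \cong k^{1/p}[[y_2,\ldots,y_d]]$ in which every element of $S/pS$ is visibly a $p$-th power. The only difference is presentational---the paper adjoins $p$-th roots of the $x_i$ and of lifts of a $p$-basis of $k$ directly to $S$ and then checks regularity, whereas you invoke Cohen's structure theorem and build $T$ as a power series ring over the Cohen ring $V'$ of $k^{1/p}$, so that regularity comes for free.
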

\begin{proof}\noindent
Complete $p$ to a minimal system of generators of $\m$, say $\m=(p,x_2,\dots,x_d) = (p, \un{x})$ and let $x_i'$ denote the image of $x_i$ in $S/pS$. Let $F$ denote the Frobenius map on $S/pS$. By hypothesis, $E:=S/pS$ is an $F$-finite regular local ring and $E^{1/p}$ is obtained by adjoining to $E$, the $p$-th roots of the $x_i'$ and the $p$-th roots of a basis of $k$ over $k^p$. Take $T$ to be the $S$-algebra obtained by adjoining the $p$-th roots of the $x_i$ and the $p$-th roots of a fixed set of lifts of a basis of $k$ over $k^p$. By construction, $S\hookrightarrow T^{p \wedge p}$. Moreover, it follows easily that $T$ is regular local with maximal ideal generated by $(p,\sqrt[p]{x_2},\dots,\sqrt[p]{x_d})$ and residue field $k^{1/p}$.  \footnote{That $T$ is regular can be checked one element at a time. After adjoining the $p$-th roots of the $x_i$, we get a URLR. When we adjoin the $p$-th roots of the units in question, the generators of the maximal ideal remain the same at each step.}
\end{proof}

\begin{rem} Let $S$, $\un{x}$ and $T$ be as in the preceding proposition, so that $S\subseteq T^{p\wedge p}$. Assume further that the residue field $k$ is perfect. Thus, $T = T_p(\un{x})$, where for $k\geq 1$, $T_k(\un{x})$ has the same meaning as in Theorem \ref{app1}. We want to observe, that in this case, $T_p(\un{x})\cap S = \bigcup_{k\geq 1}(T(\un{x})^{p\wedge p^2}\cap S)=W(\un{x})$. Suppose $f = h^p+ap^2$, with $h, a \in T_k(\un{x})$, for some $k>p$, and $f\in S$ square-free. 
We can write $f = h_0^p+bp$, for $h_0, b\in T$. Then $0 \equiv h^p-h_0^p \equiv (h-h_0)^2$ mod $pT_k(\un{x})$, from which it follows that $h \equiv h_0$ mod $pT_k(\un{x})$. 
Thus, $h = h_0 +cp$, for some $c\in T_k(\un{x})$. Therefore $f = (h_0+cp)^p + ap^2 = h_0^p+dp^2$, for some $d \in T_k(\un{x})$. Thus $f-h_0^p \in p^2T_k(\un{x})\cap T = p^2T$, since $T$ is integrally closed and $T_k(\un{x})$ is integral over $T$. It follows that $f\in T^{p\wedge p^2}$, which is what we want. 
\end{rem}

Consider the question of the existence of maximal Cohen-Macaulay modules over the integral closure of a complete regular local ring $S$ of mixed characteristic $p>0$ with perfect residue field, in an arbitrary $p$-th root tower over its quotient field $K$. One of the main differences between our point of view in this paper and our earlier ones, is that we do not restrict ourselves to {\it birational} Cohen-Macaulay modules or algebras (see Theorem \ref{app1}), wheres the constructions in \cite{DK1}, \cite{PS1}, \cite{PS2} are all birational. Thus, results like Proposition \ref{pwedge_p2} simultaneously allow us to move beyond the birational assumption and also enable us to restrict attention to the case of general square free towers with elements chosen from $S^p$. Indeed, choose an unramified regular local ring $T$, such that $S\hookrightarrow T^{p\wedge p}$ as in \Cref{pwedge_p2}. Let $L'$ be the 
fraction field of $T$ and set $\mathscr{K}:=K[L']$. Note that \Cref{HK} implies that square free elements of $S$ that are not divisible by the regular system of parameters used to construct $T$ from $S$ remain square free in $T$. It is then clear that $\mathscr{K}$ embeds in a finite field extension of it, say $\tilde{\mathscr{K}}$, obtained by adjoining $p$-th roots of mutually coprime square free elements of $T$ coming from $T^{p\wedge p}$ to $L'$. Let $\mathscr{R}$ be the integral closure of $S$ in $\tilde{\mathscr{K}}$. It then suffices to show that $\mathscr{R}$ admits a maximal Cohen-Macaulay module. In summary, in attempting to construct a maximal Cohen-Macaulay module or algebra over $R$,  we can simply start over and assume that the elements whose roots we adjoin are mutually coprime, square free and come from $S^{p\wedge p}$.

\medskip 
Finally we address the question of units. As pointed out in the introduction, when the exponents $n_i$ are units, the ring $S[\sqrt[n_1]{f_1}, \ldots, \sqrt[n_r]{f_r}]$ is integrally closed and a free $S$-module. It is not difficult to show that if, say, in Theorem \ref{class1towers} we allow some of the $n_i$ to be units, while the other $n_i = pd_i$, with $d_i$ a unit, then the conclusion of the theorem still holds. The conclusions of our other results then follow in this case as well. However, we have focused on the case that the $n_i$ are non-units, since we are motivated by the case where $S$ is an unramified regular local ring of mixed characteristic $p$, and the extensions of quotient fields have degree divisible by $p$.  We could also consider the case that some of the $f_i$ are units. However, this potentially introduces roots of unity and we lose the property that the degree of the extension of quotients field equals $p^r$. We address the issue involving roots of unity in a forth coming paper, \cite{KS}.


\begin{thebibliography}{1}

\bibitem{HK} C. Huneke and D. Katz, \textit{Uniform symbolic topologies in abelian extensions}, Trans. AMS. \textbf{372} (2019), no. 3, 1735-1750. 

\bibitem{DK1} D. Katz, \textit{On the existence of maximal Cohen-Macaulay modules over pth root extensions}, Proc. AMS, Vol. \textbf{127} (1999), No. 9, 2601-2609.

\bibitem {DK2} D. Katz, \textit{Conductors in mixed characteristic}, Jl. of Algebra, Vol \textbf{571} (2021), 350-375 

\bibitem{KS} D. Katz and P. Sridhar, \textit{Abelian extensions in mixed characteristic $p$ with small $p$-torsion}, working paper. 


\bibitem{L} S. Lang, \textit{Algebra}, Graduate Texts in Mathematics, Vol. 211 (002) Springer. 

\bibitem{PS1} P. Sridhar, \textit{Existence of birational small Cohen-Macaulay modules over biquadratic extensions in mixed characteristic}, Journal of Algebra, Volume \textbf{582} (2021), 100 - 116.

\bibitem{PS2} P. Sridhar, \textit{On the existence of birational maximal Cohen Macaulay modules over biradical extensions in mixed characteristic}, Journal of Pure and Applied Algebra, Volume \textbf{225}, Issue 12, 2021.

\end{thebibliography}
\end{document}